\newcommand\trop{\mathbb{T}}
\renewcommand{\S}{{\mathcal{S}}}
\newtheorem{thm}{Theorem}[section]
\newtheorem{cor}[thm]{Corollary}
\newtheorem{lem}[thm]{Lemma}
\newtheorem{prop}[thm]{Proposition}
\theoremstyle{definition}
\newcommand{\N}{\mathbb{N}}
\newcommand{\R}{\mathbb{R}}
\newcommand{\T}{\mathbb{T}}
\newcommand{\B}{\mathbb{B}}
\newcommand{\cS}{\mathcal{S}}
\newcommand{\cB}{\mathcal{B}}
\newcommand{\Nmax}{\N_\mathrm{max}}
\newcommand{\Nmin}{(-\N)_\mathrm{max}}
\newcommand{\kmax}{[k]_\mathrm{max}}
\newcommand{\kmin}{[-k]_\mathrm{max}}
\newcommand{\ceil}[1]{\left \lceil #1 \right \rceil}
\begin{document}
\title[Permutability of Matrices over Bipotent Semirings]{Permutability of Matrices over \\ Bipotent Semirings}
\date{\today}
\keywords{semirings, matrices, permutability, tropical matrices}
\thanks{}
\maketitle
\begin{center}
THOMAS AIRD\footnote{Email \texttt{Thomas.Aird@manchester.ac.uk}.}
AND MARK KAMBITES\footnote{Email \texttt{Mark.Kambites@manchester.ac.uk}.} \\ \ \\
Department of Mathematics, University of Manchester, \\
Manchester M13 9PL, UK.

\end{center}

\begin{abstract}
We study permutability properties of matrix semigroups over commutative bipotent semirings (of which the best-known example is the \textit{tropical semiring}). We prove that every such semigroup is weakly permutable (a result previous stated in the literature, but with an erroneous proof) and then proceed to study in depth the question of when they are strongly permutable (which turns out to depend heavily on the semiring). Along the way we classify monogenic bipotent semirings and describe all isomorphisms between truncated tropical semirings.
\end{abstract}

Commutative bipotent semirings appear naturally in many areas of mathematics; for example, the \textit{boolean} semiring has important applications in computer science \cite{Golan}, while \textit{tropical} and related semirings  have found applications in areas as diverse as algebraic geometry, geometric group theory, automata and formal languages, and combinatorial optimization and control theory
 \cite{Bio,Con,Geom}. Many of the problems which arise naturally in these areas involve finite systems of linear (over the semiring) equations and can therefore be formulated in terms of matrix operations; understanding the structure of matrix algebra over these semirings is thus vital for applications, and much recent research has been devoted to this topic.  An additional motivation comes from abstract semigroup theory, where there is increasing evidence \cite{Zur0,Zur2,JKPlactic} that tropical and related semirings are natural carriers for representations of important classes of semigroups and monoids which, due to their structural properties, do not admit faithful finite dimensional representations over fields.

\par In this paper, we focus on two algebraic finiteness conditions for semigroups of matrices over bipotent semirings: weak permutability and permutability. A semigroup $S$ is called \textit{weakly permutable} if there exists a $k \geq 2$ such for any $s_1,\dots,s_k \in S$ there exist permutations $\sigma \neq \tau$ of $\{1,\dots,k\}$ such that $s_{\sigma(1)}s_{\sigma(2)}\cdots s_{\sigma(k)} = s_{\tau(1)}s_{\tau(2)}\cdots s_{\tau(k)}$.
A semigroup $S$ is called \textit{permutable} (or sometimes \textit{strongly permutable}) if there exists a $k \geq 2$ such for any $s_1,\dots,s_k \in S$ there exists a permutation $\sigma$ of $\{1,\dots,k\}$ such that $s_{\sigma(1)}s_{\sigma(2)}\cdots s_{\sigma(k)} = s_{1}s_{2}\cdots s_{k}$. 
We note here a few key facts about these properties; for a comprehensive introduction the reader is directed to \cite[Chapter 19]{Okninski}. Notice that every strongly permutable semigroup $S$ is weakly permutable by taking $\tau$ to be the identity permutation.  Every finite semigroup is clearly strongly permutable, as is every commutative semigroup. Indeed, weak and strong permutability may be thought of as very weak commutativity conditions. It is easy to see that if a semigroup $S$ is weakly [strongly] permutable then every subsemigroup of $S$ and every homomorphic image of $S$ is also weakly [strongly] permutable. Permutability conditions are of interest in general because of connections with polynomial identities in semigroup algebras \cite[Chapter~19]{Okninski}, and are lent additional importance in these particular semigroups by interest in representations over semirings: any permutability condition satisfied by matrix semigroups poses an obstruction to faithfully representing semigroups not satisfying the condition. 

\par We begin, in Section 1, by establishing some structural results about commutative bipotent semirings which will be useful in our subsequent analysis. These include a simple classification of the monogenic examples, which may be of independent interest.

\par In Section 2 we proceed to look at weak permutability, proving that every full matrix semigroup, and hence every matrix semigroup, over a commutative bipotent semiring is weakly permutable. This fact was first stated by d'Alessandro and Pasku \cite{Ales} but there is an error (described below) in their proof.  

\par In Section 3 we turn our attention to (strong) permutability. If the semiring has an element of infinite multiplicative order (or more generally, elements of unbounded multiplicative order) we prove (Theorem~\ref{unbounded}) that the full matrix semigroup and upper triangular matrix semigroups are not strongly permutable in any dimension greater than $1$. This applies in particular to the tropical and many related semirings.
On the other hand, semirings with bounded multiplicative order exhibit a range of behaviours, with apparently similar semirings sometimes differing
quite dramatically. Matrix semigroups over \textit{chain semirings}, which are multiplicatively as well as additively idempotent, are strongly permutable in all dimensions (Corollary~\ref{chains}). 
\par Section 4 is devoted to the class of \textit{truncated tropical semirings}, where it transpires that the full matrix semigroups can be strongly permutable in all dimensions (Theorem~\ref{12perm}),  only in dimension $1$ (Corollary~\ref{unboundedtruncated}) or, interestingly, only in dimensions $1$ and $2$ (Theorem~\ref{1zperm}). Similar results are obtained for the monoids of upper triangular and upper unitriangular matrices. In the course of our study we describe all isomorphisms between truncated tropical semirings.

\par Throughout this paper we write $\mathbb{N}$ for the set of natural numbers excluding $0$. For $n \in \mathbb{N}$ we write $[n]$ for the discrete
interval $\mathbb{N} \cap [1,n]$, and $\cS_n$ for the symmetric group on the set $[n]$.

\par \textbf{Acknowledgements.} The authors thank Marianne Johnson for some helpful conversations and comments on the
draft.

\section{Commutative Bipotent Semirings}
\par For our purposes a \emph{semiring} $S$ is a non-empty set with two binary operations --- addition and multiplication --- such that both operations are associative, addition is commutative, and multiplication distributes over addition on both sides.  A semiring is called \emph{commutative} if the multiplication is commutative and \emph{bipotent} if $x + y$ is always either $x$ or $y$. A bipotent semiring admits a natural linear order
defined by $x \leq y$ if and only if $x+y = y$, and the distributive laws mean exactly that multiplication respects this order, giving rise to a totally ordered semigroup. Conversely, every totally ordered semigroup gives rise to a bipotent semiring, by taking the semigroup operation as multiplication and defining the sum to be maximum with respect to the order. Bipotent semirings are thus, at one level, the same thing as totally ordered semigroups, but the two viewpoints lead naturally to rather different questions; in particular the semiring viewpoint leads to the study of linear algebra and matrices. Our main interest is in commutative bipotent semirings, although some of our results will extend to the non-commutative case.

Some authors insist that a semiring should have a zero (that is an element which is a multiplicative zero and an additive identity) and/or a (multiplicative) identity element, but most of our results will not require these. In fact it is easy to see that any commutative bipotent semiring $S$ without a zero element can have one ``adjoined'', that is, can be embedded in a commutative bipotent semiring with one extra element $0$ which is a zero. We write $S^0$ for this semiring; if $S$ already has a zero element then we define $S^0 = S$ and use $0$ to denote the zero element of $S$.
On the other hand, the corresponding statement is not true for identity elements:
\begin{prop}\label{noidentity}
There exists a commutative bipotent semiring $S$ without identity which cannot be embedded in any bipotent semiring with identity. 
\end{prop}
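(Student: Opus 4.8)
The plan is to write down an explicit three‑element semiring and to rule out every possible embedding using a single structural observation about identities.

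\par First I would isolate the obstruction. Suppose $M$ is any bipotent semiring with an identity element $1$. Because $M$ is bipotent, $1 + x$ is either $1$ or $x$ for every $x \in M$, so every element of $M$ is comparable to $1$ in the induced order; and because multiplication respects that order (this is exactly distributivity for a bipotent semiring), $x \ge 1$ forces $xy \ge y$ and $yx \ge y$ for all $y \in M$, whereas $x \le 1$ forces $xy \le y$ and $yx \le y$ for all $y \in M$. Hence in $M$ every element is either \emph{expansive} (multiplying by it, on either side, never strictly decreases anything) or \emph{contractive} (multiplying by it never strictly increases anything). This dichotomy passes to sub-semirings, since a sub-semiring inherits the addition and therefore the induced order; and the image of any embedding is a sub-semiring isomorphic to the source by an order-preserving isomorphism. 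So it is enough to produce a commutative bipotent semiring $S$ with no identity that contains an element which is, inside $S$, neither expansive nor contractive: such an $S$ then cannot be embedded in any bipotent semiring with identity.

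\par For the example I would take $S = \{a, b, c\}$ with the chain order $a < b < c$ — so that addition is the maximum — and define multiplication by making $b$ absorbing ($bx = xb = b$ for all $x$), making $a$ and $c$ idempotent, and setting $ac = ca = b$. Three things then need to be checked, all routine: that multiplication is associative (any bracketing of a product of three factors that involves $b$ collapses to $b$ regardless of the bracketing, and the products of words in $\{a,c\}$ involve only $a^2=a$, $c^2=c$, $ac=b$, which one runs through by hand); that multiplication is order-preserving on the chain $a<b<c$, which is exactly the distributive law here; and that $S$ has no identity (the only element $x$ with $xa=a$ is $a$ itself, but $ac=b\neq c$). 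Finally $b$ is the required indefinite element: $ba = b > a$ shows $b$ is not contractive, and $bc = b < c$ shows $b$ is not expansive.

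\par The one genuinely creative point — and really the only obstacle — is to realise that one should look for such an ``indefinite'' element at all, and to arrange the order so that a single element simultaneously sends the least element strictly up and the greatest element strictly down; placing the absorbing element $b$ strictly between the two idempotents accomplishes exactly this. (Had $b$ been the least or the greatest element, $S$ would simply embed in the ordered monoid obtained by adjoining a new bottom or top identity, so the position of $b$ is essential; in fact, once $a,c$ are chosen idempotent and $b$ is put in the middle, the value $ac=b$ is forced by order-preservation.) Everything else is the finite verification above.
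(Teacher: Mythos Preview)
Your proof is correct and uses exactly the same three-element example as the paper (your ``$b$ absorbing, $a,c$ idempotent, $ac=b$'' is precisely the paper's ``all elements idempotent, all non-idempotent products equal $b$''). Your obstruction argument is also essentially the paper's case split on whether $1>b$ or $1<b$, repackaged as the cleaner observation that in a bipotent monoid every element is expansive or contractive while $b$ is neither; the underlying computations are identical.
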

\begin{proof}
Let $S = \{ a , b , c \}$ be the commutative bipotent semiring such that $c \geq b \geq a$, all elements are multiplicatively idempotent, and all non-idempotent products are $b$.  It is straightforward to verify that the given operations respect the associative and distributive laws. Now suppose we can embed $S$ in a bipotent semiring with identity $1$, and consider where $1$ lies in the order. If $1 > b$, then $a(1+b) = a1 = a$, but by the distributive law $a(1+b) = a1 + ab = a+b = b$ giving a contradiction. On the other hand, if $1 < b$, then $c(1+b) = cb = b$, but similarly by the distributive law $c(1+b) = c1 + cb = c+b = c$, giving a contradiction. Thus, we cannot embed $S$ into a bipotent semiring with identity.
\end{proof}
Notwithstanding the impossibility in general of adjoining an identity element, it is sometimes convenient to introduce ``the identity''
as a \textit{purely notational} device. If $S$ is a commutative semiring without identity, we define $S^1$ to be $S \cup \lbrace 1 \rbrace$ where $1$ is a new symbol, and define $1x=x$  for all $x$ and $1+1 = 1$ and also (where $S$ has a $0$) $1+0 = 1$, but leave other sums involving $1$ undefined. We caution that this structure is not a semiring, since addition is only partially defined. Again, if $S$ already has an identity we set $S^1 = S$ and use $1$ to denote the existing identity. We write $S^{01}$ for $(S^0)^1$.

A \textit{subsemiring} is a subset closed under addition and multiplication; note that even if $S$ has zero and/or identity
elements, subsemirings are not required to contain them.
If $a \in S$ then we write $\langle a \rangle$ for the (\textit{monogenic}) subsemiring of $S$ generated by $a$ (that is, the intersection of all subsemirings
containing $a$). If $S$ is bipotent then $\langle a \rangle$ coincides with the
multiplicative subsemigroup of $S$ generated by $a$, in other words, the set of positive powers of $a$. The
(multiplicative) \textit{order} of $a$ is defined to be the cardinality of the set of positive powers of $a$, which when $S$ is bipotent is the cardinality of $\langle a \rangle$.

We will consider in particular the following examples of commutative bipotent semirings; some of these merit study due to external
applications, some arise naturally in the general theory, and others are included to illustrate the full range of possible behaviours:
\begin{itemize}
\item The \textit{tropical} (or \textit{max-plus}) \textit{semifield} $\T$ consists of the real numbers augmented with $-\infty$, with maximum as its addition and addition as
its multiplication; it has applications in numerous areas including; biology \cite{Bio}, control theory \cite{Con} and algebraic geometry \cite{Geom}.
The tropical semifield admits isomorphic manifestations as the \textit{min-plus semifield} (the real numbers augmented with $+\infty$ under \textit{minimum} and classical addition) and the \textit{max-times semifield} (the non-negative real numbers under maximum and classical \textit{multiplication}).
\item The \textit{tropical natural number semiring} $\Nmax$ is the subsemiring of $\T$ consisting of natural numbers; it has applications in areas such as formal language theory and automata theory \cite{Auto}.
\item The \textit{tropical negative natural number semiring} $\Nmin$ is the subsemiring of $\T$ consisting of the negative integers. (It is isomorphic
to the natural numbers under \textit{minimum} and classical addition.)
\item For $x, y \in \mathbb{R}$ with $0 \leq x < y$ the \textit{truncated tropical semiring} $\T_{[x,y]}$ consists of the real interval $[x,y]$ augmented with $0$ and $-\infty$ with operations maximum and \textit{$y$-truncated addition} given by $ab = \min(a+b, y)$ where $+$ here denotes classical addition.
\item For $k \in \mathbb{N}$ the \textit{truncated tropical natural number semiring} $\kmax$ consists of the set $[k] = \lbrace 1, \dots, k \rbrace$ with operations defined
as in $\T_{[1,k]}$.
\item For $k \in \mathbb{N}$ the \textit{truncated tropical negative natural number semiring} $\kmin$ consists of the set $\lbrace -k, \dots, -1 \rbrace$ with operations maximum and \textit{$(-k)$-truncated addition} given by $a \cdot b = \max(a+b,-k)$. (Note that $[-1]_{\max}$ and $[1]_{\max}$ are both trivial and therefore isomorphic to each other.)
\item Any linearly ordered set admits the structure of a commutative bipotent semiring, with maximum as addition and minimum as multiplication. We call these \emph{chain semirings}. A prominent example is the 2-element chain semiring, the \textit{boolean semifield}, which is isomorphic to the semiring with two elements $True$ and $False$ with operations ``and'' and ``or'', and has natural applications in logic and computer science \cite{Golan}.
\end{itemize}

For any semiring $S$ and $n \in \mathbb{N}$, the set of $n \times n$ matrices over $S$ forms a semiring under matrix multiplication and addition induced from $S$ in the usual way. Note that $M_n(S)$ will typically be neither commutative nor bipotent (even when $S$ is both). Our main interest here is in the multiplicative semigroup of $M_n(S)$. We also define $UT_n(S)$ to be the subsemiring of $M_n(S^0)$ consisting of matrices with $0$ below the main diagonal and elements from $S$ on and above the main diagonal.
We write $U_n(S)$ for the semiring of matrices over $S^{01}$ which have $0$ below the main diagonal, $1$ on the main diagonal and elements from $S$ above the main diagonal; note that even if the $1$ is adjoined the partial addition in $S^{01}$ is defined for sufficiently many values to enable matrix addition and multiplication on this set. Again, our principal interest is in the structure of $UT_n(S)$ and $U_n(S)$ as multiplicative semigroups. Note that $M_1(S) = UT_1(S)$ is isomorphic to the multiplicative semigroup of $S$, while $U_1(S)$ is the trivial monoid and $U_2(S)$ is isomorphic to the additive semigroup of $S$.

\begin{lem} \label{order}
Let $S$ be a bipotent semiring. If an element $x \in S$ has finite multiplicative order (that is, has finitely many distinct powers) then it has period $1$ (that is, $x^k = x^{k+1}$ for some $k \in \N$).
\end{lem}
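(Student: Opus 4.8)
The plan is to work entirely inside the monogenic subsemiring $\langle x \rangle$, which in a bipotent semiring coincides with the multiplicative cyclic semigroup $\{x, x^2, x^3, \dots\}$. Since $x$ has finitely many distinct powers, the sequence of powers is eventually periodic: there exist minimal $m \geq 1$ and $p \geq 1$ such that $x^{m+p} = x^m$ and the powers $x, x^2, \dots, x^{m+p-1}$ are all distinct. The claim is exactly that $p = 1$. I would argue by contradiction, assuming $p \geq 2$.

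The key observation is that the linear order on $S$ restricts to a linear order on the finite set $\langle x \rangle$, and multiplication by $x$ is an order-preserving map on this set (this is just the statement that multiplication respects $\leq$, which is built into the definition of a bipotent semiring). Now the ``tail'' consisting of the distinct elements $x^m, x^{m+1}, \dots, x^{m+p-1}$ forms a cycle of length $p$ under multiplication by $x$: the map sending $x^{m+i}$ to $x^{m+i+1}$ (indices mod $p$) is a cyclic permutation of this $p$-element set. But an order-preserving self-map of a finite linearly ordered set which is a bijection must be the identity — one proves this by a short induction, noting that the minimum must map to the minimum, then the next smallest to the next smallest, and so on. Hence the cyclic permutation of the $p$ tail elements is the identity, forcing $p = 1$, contradicting $p \geq 2$.

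Assembling this, the argument runs: (i) reduce to $\langle x \rangle$ and record that it is finite and linearly ordered; (ii) extract the eventual period $p$ and index range; (iii) observe multiplication by $x$ is order-preserving and restricts to a bijection on the $p$-element tail; (iv) invoke the elementary fact that an order-preserving bijection of a finite chain is the identity; (v) conclude $p = 1$, i.e. $x^m = x^{m+1}$, which is the desired statement with $k = m$.

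I do not expect any serious obstacle here; the only point requiring a moment's care is step (iv), the rigidity of finite chains under order-preserving bijections, and even that is a one-line induction on the cardinality. The slight subtlety worth flagging in the write-up is that multiplication by $x$ need not be injective on all of $\langle x \rangle$ (the pre-period collapses), which is precisely why I restrict attention to the periodic tail, where it genuinely is a bijection.
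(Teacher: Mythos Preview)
Your proof is correct, and it rests on the same key phenomenon as the paper's: multiplication by $x$ is an order-preserving map that cyclically permutes the periodic tail $\{x^m, x^{m+1}, \dots, x^{m+p-1}\}$. The paper finishes more directly by taking the semiring sum of the tail elements, which by bipotency equals the maximum element $x^k$ of the tail; since multiplying the sum by $x$ merely cyclically permutes the summands, one gets $x^{k+1} = x^k$ in a single line. You instead invoke the rigidity of finite chains under order-preserving bijections to conclude that multiplication by $x$ fixes \emph{every} tail element, whence $x^m = x^{m+1}$. Your route trades a one-line computation for a short auxiliary order-theoretic lemma; it is slightly more conceptual, and as a minor bonus it shows that the $k$ in the statement can be taken to be the index $m$ of periodicity, whereas the paper's $k$ is whichever exponent in $\{m,\dots,m+p-1\}$ gives the maximal tail element.
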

\begin{proof}
Let $x \in S$ have finite multiplicative order. Then there exist $r, m \in \N$ such that $x^m = x^{m+r}$. If $r=1$ we are done, so assume $r>1$. As $S$ is bipotent we have that the sum $x^m + \dots + x^{m+r-1} = x^k$ for some $k$ between $m$ and $m+r-1$. But now by distributivity and commutativity of addition,
\begin{align*}
x^{k+1} \ &= \ x(x^m + \dots + x^{m+r-2} + x^{m+r-1}) \\
&= \ x^{m+1} + \dots + x^{m+r-1} + x^m \\
&= \ x^k.
\end{align*}
\end{proof}

The following lemma describes all the possible bipotent semirings generated by a single element:
\begin{lem} \label{Ninside}
Let $S$ be a bipotent semiring. If $a \in S$ and $\langle a \rangle$ is the monogenic subsemiring generated by $a$, then
\[\langle a \rangle \cong
\begin{cases}
\Nmax  &\text{ if } a \text{ has infinite order and } a < a^2; \\
\Nmin &\text{ if } a \text{ has infinite order and } a^2 < a; \\
\kmax &\text{ if } a \text{ has order } k \in \mathbb{N} \text{ and } a \leq a^2; \\
\kmin &\text{ if } a \text{ has order } k \in \mathbb{N} \text{ and } a^2 \leq a.
\end{cases}\]
\end{lem}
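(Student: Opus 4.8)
The plan is to exhibit in each case an explicit isomorphism from the stated model semiring onto $\langle a \rangle$, namely the map $\phi$ carrying the $n$-th power of the model's generator to $a^n$. Two preliminary observations set this up. First, since $S$ is bipotent, $\langle a \rangle$ is exactly the set $\{a, a^2, a^3, \dots\}$ of positive powers of $a$, and the semiring addition on $\langle a \rangle$ is simply the restriction of the order-maximum of $S$. Second, because multiplication respects the order, from $a \leq a^2$ one deduces $a^n \leq a^{n+1}$ for all $n \in \N$ by induction, and symmetrically $a^2 \leq a$ forces $a^n \geq a^{n+1}$ for all $n$; so the sequence of powers is monotone, and exactly one of $a < a^2$, $a^2 < a$ holds unless $a = a^2$ --- in which case $a$ has order $1$, the subsemiring $\langle a \rangle$ is trivial, and the conclusion holds with $[1]_\mathrm{max} \cong [-1]_\mathrm{max}$.

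First I would treat the infinite-order cases, where all powers of $a$ are distinct; combined with monotonicity this yields $a^i < a^j \iff i < j$ in the case $a < a^2$, and the reverse in the case $a^2 < a$. In the first case $\phi \colon \Nmax \to \langle a \rangle$, $n \mapsto a^n$, is then a bijection, and it is a homomorphism because $\phi(m)\phi(n) = a^{m+n}$ realises the (additive) multiplication of $\Nmax$ and $\phi(m) + \phi(n) = \max(a^m,a^n) = a^{\max(m,n)} = \phi(\max(m,n))$ realises its addition. The case $a^2 < a$ is identical after reindexing the powers by the negative integers: $\phi \colon \Nmin \to \langle a \rangle$ with $\phi(-n) = a^n$ sends $(-m)+(-n)$ to $a^{m+n}$ and $\max(-m,-n) = -\min(m,n)$ to $a^{\min(m,n)} = \max(a^m,a^n)$.

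For the finite-order cases, suppose $a$ has order $k$. By Lemma~\ref{order}, $a$ has period $1$; since the order counts distinct powers, this means $a, a^2, \dots, a^k$ are pairwise distinct while $a^m = a^k$ for all $m \geq k$. When $a \leq a^2$ the powers strictly increase up to $a^k$ and are constant afterwards, so $\phi \colon \kmax \to \langle a \rangle$, $i \mapsto a^i$, is a bijection from $[k]$; addition is matched just as before, and for multiplication $\phi(i)\phi(j) = a^{i+j} = a^{\min(i+j,k)} = \phi(\min(i+j,k))$, where the middle equality uses that powers stabilise at $a^k$ --- and $\min(i+j,k)$ is precisely the $k$-truncated addition defining $\kmax$. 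Hence $\langle a \rangle \cong \kmax$, and the case $a^2 \leq a$ follows by the same negative-index reindexing, the stabilisation $a^m = a^k$ now accounting for the $(-k)$-truncated addition of $\kmin$.

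All of this is routine once the bookkeeping is in place; the one point that genuinely needs care is reconciling ``order'' as the number of distinct powers with the defining relation of the truncated model --- one must check that order $k$ forces $a^k = a^{k+1}$ with no earlier collapse, which is exactly the content of Lemma~\ref{order}, and that in the finite cases the truncation in the model's multiplication corresponds to the stabilisation $a^m = a^k$ for $m \geq k$ rather than to a coincidence among smaller powers. I expect getting this correspondence exactly right, rather than any genuine difficulty, to be the main thing to watch.
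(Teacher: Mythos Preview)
Your proposal is correct and follows essentially the same approach as the paper: both define the map $n \mapsto a^n$ (or its negative-index variant), use the compatibility of multiplication with the order to obtain monotonicity of powers, invoke Lemma~\ref{order} for the period-$1$ property in the finite case, and then match the truncation in $\kmax$/$\kmin$ with the stabilisation $a^m = a^k$ for $m \geq k$. The only difference is organisational --- the paper groups by the sign of $a^2 - a$ first and splits into infinite/finite within each, whereas you do it the other way round --- but the content is the same.
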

We remark that the four cases above are comprehensive but not quite mutually exclusive: in the case that $a$ has order $1$ we have $a = a^2$ and $\langle a \rangle$
is isomorphic to both $[1]_{\max}$ and $[-1]_{\max}$.
\begin{proof}
First suppose $a \leq a^2$. Define a map
$$\phi \ : \ \Nmax \to \langle a \rangle, \ \ n \mapsto a^n.$$
This map is surjective (because of our observation that, in a bipotent semiring, $\langle a \rangle$ coincides with the multiplicative
semigroup generated by $a$) and preserves multiplication because of basic properties of powers.
Now let $n, m \in \N$ and suppose without loss of generality that $n \geq m$. Since $a \leq a^2$ we have $a^k \leq a^{k+1}$ for all $k$ (because the total order is compatible with multiplication) and hence $a^m \leq a^n$ (because $m \leq n$ and the order is transitive). Therefore
$$
\phi(\max(n,m)) \ = \ \phi(n) \ = \ a^n \ = \ a^n + a^m \ = \ \phi(n) + \phi(m).
$$
If $a$ has infinite order then $\phi$ is injective, and we have shown that it is an isomorphism from $\Nmax$ to $\langle a \rangle$.

\par If $a$ has finite order $k$ then let $\varphi$ be the restriction of $\phi$ to the subset $[k]$.
Clearly $\varphi$ is a bijection. Since the semiring addition (in other words, the order) on $\kmax$ is the restriction of that on 
$\Nmax$, the fact that $\varphi$ preserves semiring addition follows from the fact that $\phi$ does. Now let $n, m \in \N$ and suppose without loss of generality that $n \geq m$. Then 
$$\varphi(nm) \ = \ a^{n + m} \ = \ a^n a^m \ = \ \varphi(n) \varphi(m)$$
for all $n, m \in \N$. The first equality here holds because if $n+ m \geq k$ then $a^{n+m} = a^k$, as $a$ has period 1 by Lemma~\ref{order}.
Hence, $\varphi$ is an isomorphism between $\kmax$ and $\langle a \rangle$.

Similarly if $a^2 \leq a$ then we define
$$\psi \ : \ \Nmin \to \langle a \rangle, \ \ n \mapsto a^{-n}.$$
Again $\psi$ is surjective. This time for negative integers $n \geq m$ we use $a^2 \leq a$ to deduce that $a^{-m} \leq a^{-n}$ so 
$$\psi(\max(n,m)) = \psi(n) = a^{-n} = a^{-n} + a^{-m} = \psi(n) + \psi(m).$$
and $\psi$ preserves semiring addition. If $a$ has infinite order then $\psi$ is injective and preserves the semiring multiplication, so it is
an isomorphism between $\Nmin$ and $\langle a \rangle$. If $a$ has finite order $k$ then an entirely similar argument to that above shows that the restriction of $\psi$ to $-[k]$ is an isomorphism between $\kmin$ and $\langle a \rangle$.
\end{proof}

\section{Weak Permutability}

\par In this section we briefly consider weak permutability, showing that any semigroup of matrices over a commutative bipotent semiring always has this
property. This result was first stated by d'Allesandro and Pasku \cite{Ales}, but Taylor \cite{TaylorThesis} identified an error in their proof. The error and its consequences are
discussed below. Our proof is, nonetheless, inspired by their method.

\begin{prop}
Let $S$ be a commutative bipotent semiring. Then $M_n(S)$ is weakly permutable for all $n \in \N$.
\end{prop}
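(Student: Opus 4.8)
The plan is to recast weak permutability of $M_n(S)$ as follows: it suffices to produce a single $k \ge 2$ such that for every $k$-tuple $B_1, \dots, B_k \in M_n(S)$, two of the $k!$ products $B_{\tau(1)} \cdots B_{\tau(k)}$ ($\tau \in \cS_k$) coincide. I would take $k$ to be any integer greater than $n^{2n^2}$ — say $k = n^{2n^2}+1$, which is automatically at least $2$. Throughout I would use the walk description of products over a bipotent semiring: writing $\le$ for the entrywise order on $M_n(S)$ induced by the total order on $S$, one has $(C_1 \cdots C_k)_{ij} = \max \prod_{t=1}^k (C_t)_{v_{t-1} v_t}$, the maximum taken over all walks $i = v_0, v_1, \dots, v_k = j$ on the vertex set $[n]$.

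The heart of the argument is a lemma: \emph{for any $C_1, \dots, C_k \in M_n(S)$ with $k > n^{2n^2}$ there is a transposition $\rho \in \cS_k$ with $C_{\rho(1)} \cdots C_{\rho(k)} \ge C_1 \cdots C_k$}. To prove it, for each entry $(i,j)$ fix an optimal walk $v^{ij}$, and to each position $b \in [n^{2n^2}+1]$ attach the profile $\Phi(b) = \big((v^{ij}_{b-1}, v^{ij}_b)\big)_{(i,j) \in [n]^2}$, recording the edge used at step $b$ by the chosen optimal walk of every entry simultaneously. There are only $n^{2n^2}$ possible profiles, so some two positions $b < b'$ have $\Phi(b) = \Phi(b')$; set $\rho = (b\ b')$. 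For each $(i,j)$, evaluating the walk $v^{ij}$ inside $C_{\rho(1)} \cdots C_{\rho(k)}$ changes, relative to its evaluation inside $C_1 \cdots C_k$, only the two factors coming from positions $b$ and $b'$, which get interchanged; since $v^{ij}$ traverses a common edge $e$ at both positions, those factors go from $(C_b)_e (C_{b'})_e$ to $(C_{b'})_e (C_b)_e$, which are equal by commutativity of $S$. Hence the walk $v^{ij}$ still contributes the value $(C_1 \cdots C_k)_{ij}$, giving the inequality entrywise.

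The subtle point is that the lemma yields only an inequality, and iterating it naively just produces an increasing chain of products, never an equality. I would close this gap by a finiteness/maximality trick: the set $\cP = \{B_{\tau(1)} \cdots B_{\tau(k)} : \tau \in \cS_k\}$ is finite and nonempty, and since $S$ is totally ordered the entrywise relation makes $M_n(S)$ a poset, so $\cP$ has a maximal element $P^* = B_{\tau^*(1)} \cdots B_{\tau^*(k)}$. Applying the lemma to the matrices $C_t := B_{\tau^*(t)}$ produces a transposition $\rho \ne \mathrm{id}$ with $B_{(\tau^*\rho)(1)} \cdots B_{(\tau^*\rho)(k)} \ge P^*$; the left-hand side lies in $\cP$, so by maximality it equals $P^*$, while $\tau^*\rho \ne \tau^*$. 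This exhibits two distinct orderings of $B_1, \dots, B_k$ with the same product, establishing weak permutability of $M_n(S)$ (and hence, since subsemigroups inherit the property, of every matrix semigroup over $S$).

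I expect the two real obstacles to be precisely these: (i) forcing a single transposition to respect the optimal walks of all $n^2$ entries at once — handled by letting the profile range over every entry simultaneously, which inflates $k$ to $n^{2n^2}+1$ but is otherwise harmless; and (ii) upgrading the entrywise inequality from the lemma to an honest equality of products — handled by passing to a maximal element of the finite set $\cP$ rather than iterating. The remaining pieces (the walk formula for matrix products over a bipotent semiring, checking that $\le$ is a partial order, and the degenerate case $n=1$, where the statement reduces to commutativity of $S$) are routine.
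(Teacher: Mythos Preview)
Your argument is correct, and it takes a genuinely different route from the paper's. Both proofs rest on the same combinatorial object --- the ``profile'' $\Phi(b)$ that records, for each of the $n^2$ entries, the edge traversed at step $b$ by a fixed optimal walk --- but they deploy it differently. The paper fixes $k$ with $k! > c^k$ (where $c = n^{2n^2}$), attaches to \emph{every} permutation $\sigma \in \cS_k$ a full ``profile sequence'' in $\Pi^{[k]}$ for the product $M_\sigma$, and then pigeonholes over the $k!$ permutations to find two whose profile sequences agree identically; because the product is determined by the profile sequence (after reordering factors using commutativity of $S$), those two products are equal outright. You instead pigeonhole over the $k$ \emph{positions} of a single product to find a transposition $\rho$ under which the fixed optimal walks survive unchanged --- this gives only the entrywise inequality $C_{\rho} \ge C_e$, and you then close the gap by passing to a maximal element of the finite set $\cP$ of all reordered products.

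What each buys: your approach needs only $k = n^{2n^2}+1$ rather than $k$ on the order of $e \cdot n^{2n^2}$, and the maximality trick is a clean way around the inequality-versus-equality obstruction. The paper's approach avoids the partial-order detour entirely and gets equality directly, at the cost of pigeonholing over the much larger set $\cS_k$ into $\Pi^{[k]}$. Both are valid; yours gives a somewhat sharper quantitative statement and isolates a reusable lemma (every product is dominated entrywise by a transposed product), while the paper's is more self-contained in that it never has to argue about maximal elements of the product poset.
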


\begin{proof}
Fix $n \in \N$. Let $\Gamma_n$ denote the complete directed graph (with loops)
on the set $[n]$. We identify edges in $\Gamma_n$ with pairs in $[n] \times [n]$
in the obvious way; in particular we will index the entries of $n \times n$
matrices by edges in $\Gamma_n$.

Let $\Pi$ denote the set of $n \times n$ matrices whose entries are edges 
from $\Gamma_n$ (that is, pairs from $[n] \times [n]$). Let
$c = |\Pi| = n^{2n^2}$. Choose $k$ large enough that $k! > c^k$.

Consider a finite sequence of $k$ matrices of size $n \times n$ 
over the semiring $S$, say $M_1, \dots, M_k$. For a permutation $\sigma$ in
the symmetric group $\cS_k$, write
$$M_\sigma \ = \ M_{\sigma(1)} M_{\sigma(2)} M_{\sigma(3)} \cdots M_{\sigma(k)}.$$
We must show that there are 
distinct permutations $\sigma, \tau \in \cS_k$ with $M_\sigma = M_\tau$.

We define a function $\pi : \cS_k \to \Pi^{[k]}$ (where $\Pi^{[k]}$
denotes the set of functions from $[k]$ to $\Pi$) as follows. For each
$\sigma \in \cS_k$ and each $x, y \in [n]$, consider the $(x,y)$ entry
of the matrix $M_\sigma$. It follows from the definition of matrix
multiplication and the fact $S$ is bipotent that there is at
least one path $p_1, \dots, p_k$ of length $k$ from $x$ to $y$ in $\Gamma_n$
such that this entry is given by
\begin{equation}\label{eqa}
{(M_\sigma)}_{x,y} \ = \ {(M_{\sigma(1)})}_{p_1} {(M_{\sigma(2)})}_{p_2} \cdots {(M_{\sigma(k)})}_{p_k}.
\end{equation}
Choose any such path, and for each $i \in [k]$ define the $(x,y)$ entry of
$\left( \pi (\sigma) \right) (i)$ to be the edge $p_{\sigma^{-1}(i)}$ (that is, the
edge indexing the entry of $M_i$ which contributes in the computation of the $(x,y)$ entry of
$M_\sigma$). Thus reordering the terms in \eqref{eqa} we have
\begin{equation*}\label{eqb}
(M_\sigma)_{x,y} = 
(M_1)_{\left( \pi(\sigma) \right)(1)_{x,y}} 
(M_2)_{\left( \pi(\sigma) \right)(2)_{x,y}} \cdots
(M_k)_{\left( \pi(\sigma) \right)(k)_{x,y}}
\end{equation*}
But this means that $M_\sigma$ is a function of $\pi(\sigma)$.

The domain
$\cS_k$ of $\pi$ has cardinality $k!$ while the codomain 
$\Pi^{[k]}$ of $\pi$ has cardinality $|\Pi|^k = c^k$. Since $k$
was chosen such that $k! > c^k$ there must be distinct permutations
$\sigma, \tau \in \cS_k$ such that $\pi(\sigma) = \pi(\tau)$, which
by the previous paragraph means that $M_\sigma = M_\tau$.
\end{proof}

The mistake in \cite{Ales} lies in the proof of the first part of \cite[Proposition 3]{Ales}, where $k$
is taken to be the smallest integer such that $\alpha k^\beta < k!$. The problem is that
$k$ was discussed prior to this point, and in fact played an implicit role in the definition of the set
$\mathcal{C}$, the cardinality of which was in turn used to define $\alpha$ and $\beta$. Thus, one is not necessarily free to choose
$k$ at this point without also changing $\alpha$ and $\beta$. The claim that one may choose $k$ with
$\alpha k^\beta < k!$ implicitly assumes $\alpha$ and $\beta$ to be constant, when in reality they are functions of
$k$ and there is no immediate reason to suppose
that $\alpha k^\beta$ grows more slowly than $k!$.

We discuss briefly the impact upon the correctness of other results in \cite{Ales}. 
The second part of \cite[Proposition 3]{Ales} (which establishes the very important result that finitely generated semigroups of tropical matrices
have polynomial growth) is correct, even though the proof ostensibly employs the same argument as the first part; the erroneous section
of the argument is not required in this part, and the values of $\alpha$ and $\beta$ (and hence also of $\delta$ and $\gamma$)
here are independent of $k$ so that the growth bound obtained really is polynomial in $k$. \cite[Proposition 4]{Ales} is claimed to be proved by ``a slight generalisation'' of the (erroneous) proof of \cite[Proposition 3]{Ales}; we believe a variation on the above proof technique can be used to establish this result, but we do not do this here as it is (not being concerned with bipotent semirings) rather outside the scope of the present paper. The statement of \cite[Proposition 5]{Ales} is true: the main proof given relies on \cite[Proposition 4]{Ales}
and is therefore incomplete, but the alternative proof via Gromov's polynomial growth theorem, outlined in \cite[Remark 3]{Ales}, is valid. 

\section{Strong Permutability}

In this section we turn our attention to the stronger version of permutability.
We shall need the following result, which is trivial where the semiring $S$ has a zero element but requires slightly more work when it does not.
First, recall that for a product of $k$ matrices $M_1\cdots M_k$ and a permutation $\sigma \in \cS_k$, we write $M_\sigma = M_{\sigma(1)}\cdots M_{\sigma(k)}$.
\begin{prop}\label{dimensionchange}
Let $S$ be a commutative bipotent semiring. If $M_n(S)$ is strongly permutable then $M_m(S)$ is strongly permutable for all $m < n$. If $UT_n(S)$ is strongly permutable then $UT_m(S)$ is strongly permutable for all $m < n$.
\end{prop}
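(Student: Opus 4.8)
The plan is to reduce a product of $m\times m$ matrices to a product of $n\times n$ matrices by padding, so that permutability in dimension $n$ transfers down to dimension $m$. The natural idea is to embed $M_m(S)$ into $M_n(S)$ by putting a given $m\times m$ matrix $A$ into the top-left block and filling the rest with something harmless. If $S$ has a zero, we pad with $0$ off the block and the embedding $A\mapsto \mathrm{diag}(A,0,\dots,0)$ is a semiring homomorphism; since the image is a subsemigroup isomorphic to $M_m(S)$, strong permutability of $M_n(S)$ passes to $M_m(S)$, and likewise for $UT_n(S)\supseteq$ the block-embedded copy of $UT_m(S)$. This handles the trivial case mentioned in the statement.

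When $S$ has no zero, the difficulty is that there is no element to pad with that behaves like an absorbing $0$, so the block-diagonal trick fails. First I would pick any element $t\in S$ and, given matrices $M_1,\dots,M_k\in M_m(S)$, form $n\times n$ matrices $\widehat{M_i}$ whose top-left $m\times m$ block is $M_i$ and whose remaining entries are all chosen to be a single, sufficiently ``large'' power of $t$ — large enough that in every product of $k$ of these matrices the extra entries never interfere with (i.e.\ never beat, in the max sense) the genuine $m\times m$ block contributions. Concretely, I would multiply the off-block entries by $t^N$ for $N$ large compared with $k$ and with the entries appearing in the $M_i$; one has to check that in any length-$k$ product the $(x,y)$ entry with $x,y\le m$ is still computed purely from paths staying in $[m]$, because any path that leaves the first $m$ coordinates accumulates at least one factor of $t^N$ and is thus dominated. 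This requires $S$ to have an element with at least two distinct powers, or more care — but actually one can instead work in $S^0$: since $M_m(S)$ embeds in $M_m(S^0)$ and strong permutability is inherited by subsemigroups, it suffices to prove the implication for $S^0$, which has a zero, reducing everything to the trivial case.

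So the cleanest route is: (1) observe strong permutability is inherited by subsemigroups and homomorphic images (already noted in the paper); (2) note $M_m(S)$ is a subsemigroup of $M_m(S^0)$ and $UT_m(S)$ a subsemigroup of $UT_m(S^0)$, and similarly $M_n(S)$ embeds in $M_n(S^0)$ — wait, we need the hypothesis on $M_n(S)$, not $M_n(S^0)$, so this direction is the wrong way. Hence I would instead argue: if $M_n(S)$ is strongly permutable, then since $M_m(S)$ embeds in $M_n(S)$ via the block-diagonal map padded with entries equal to a fixed idempotent-like large power — the honest fix is the padding argument — $M_m(S)$ is strongly permutable. The main obstacle, then, is precisely making the padding in the zero-free case rigorous: showing the off-block garbage can be pushed high enough (using distributivity and the order) that it is invisible in the top-left block of every relevant product, and doing so uniformly over all $k!$ permutations and the finitely many input matrices. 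Everything else — the homomorphism/subsemigroup bookkeeping and the $UT$ variant, which restricts the same padding to upper-triangular shape — is routine.

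\begin{remark}
If one prefers to avoid the padding estimate entirely, an alternative is available when $S$ has an identity or can be handled via $S^{01}$: realize $M_m(S)$ as the corner $e M_n(S^0) e$ for a diagonal idempotent $e$ of rank $m$, note a product of corner matrices is again a corner matrix, and transport any permutation equality in $M_n(S^0)$ back down; but since the hypothesis concerns $M_n(S)$ and not $M_n(S^0)$, one still needs to check the corner computation never uses the adjoined $0$, which is again the same estimate in disguise.
\end{remark}
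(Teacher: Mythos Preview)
Your overall strategy --- pad $m\times m$ matrices to $n\times n$ and transfer permutability down --- is exactly the paper's, but you never settle on a padding element that actually works, and the one you gesture at (a ``sufficiently large power of $t$'') is both in the wrong direction and may not exist. For the off-block entries to be harmless in a bipotent (max-type) semiring they must be \emph{small} in the order, so that paths through them never win the maximum; a ``large'' power of some $t$ need not be small (if $t<t^2$ it is large), and even if $t^2<t$ there is no reason any power of $t$ lies below all entries of the given $M_i$ (consider $M_i$ with all entries equal to a fixed idempotent $e$ and $t>e$ with $t^2=t$). The detour through $S^0$ fails for the reason you yourself notice: the hypothesis is on $M_n(S)$, not $M_n(S^0)$.

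The paper's fix, which you are circling but not landing on, is: take $z$ to be the \emph{minimum entry already appearing among the $M_i$}, and pad every off-block position with $z$. Then for $x,y\le m$, any path $x_0,\dots,x_k$ in the computation of $(N_{i_1}\cdots N_{i_k})_{x,y}$ that visits some $x_j>m$ can have $x_j$ replaced by $m$: the two affected factors $(N_{i_j})_{x_{j-1},x_j}$ and $(N_{i_{j+1}})_{x_j,x_{j+1}}$ were both equal to $z$ by construction, and after replacement they are $\ge z$, so the term does not decrease. Iterating, the maximum is attained on a path staying in $[m]$, whence the top-left $m\times m$ block of $N_\sigma$ is exactly $M_\sigma$ for every $\sigma$. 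No powers, no elements of infinite order, no adjoined zero are needed; the required ``small'' element is already present in your data, and depends only on the finitely many $M_i$ for the given $k$.

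For the $UT$ case the paper bypasses padding entirely: truncation to the top-left $m\times m$ block is a \emph{surjective} semigroup homomorphism $UT_n(S)\to UT_m(S)$, and strong permutability passes to homomorphic images. Your padding-in-upper-triangular-shape would also work, but this is cleaner.
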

\begin{proof}
Consider first the case of full matrix semigroups.
Suppose false for a contradiction; then there is an $m < n$ such that for every $k \in \N$ there exist $m \times m$ matrices $M_1, \dots, M_k$ such that $M_\sigma \neq M_e$
for any non-trivial permutation $\sigma$.
Fix $k$ and let $M_1, \dots, M_k$ be as given. Let $z$ be the smallest (with respect to the order on the semiring) entry of any matrix $M_i$.
For each $i$ let $N_i$ be the $n \times n$ matrix obtained by taking $M_i$ and adjoining $n-m$ rows at the bottom and $n-m$ columns at the right in which every entry is $z$.
\par Now consider the $x,y$ entry of a product $N_{i_1}\cdots N_{i_k}$ for $x,y \leq m$. As $S$ is bipotent this entry is equal to the maximum (with respect to
the order in the semiring) across sequences
$x = x_0, x_1, \dots, x_k = y$ of the term:
$$\prod_{j=1}^{k}{(N_{i_j})}_{x_{j-1},x_j}.$$
If in such a sequence we have $x_j > m$ for some $1 \leq j < k$, then $(N_{i_j})_{x_{j-1},m} \geq z = (N_{i_j})_{x_{j-1},x_j}$ and $(N_{i_{j+1}})_{m,x_{j+1}} \geq z = (N_{i_{j+1}})_{x_j,x_{j+1}}$ by definition, so we may replace $x_j$ by $m$ in the sequence without reducing the resulting term. Thus, we may assume the above
maximum is attained for a sequence with $x_j \leq m$ for all $j$, and it follows that the top-left $m \times m$ submatrix of the product is the product of the corresponding submatrices in the factors, in other words, the corresponding product of the $M_i$s.
In particular, for any permutation $\sigma$ the top-left $m \times m$ submatrix of $N_\sigma$ is exactly $M_\sigma$. Thus, 
$N_\sigma \neq N_e$ for any
non-trivial permutation $\sigma$, which since $k$ was chosen arbitrarily contradicts the assumption that $M_n(S)$ is permutable.
\par For the upper triangular case, there exists a surjective homomorphism from $UT_n(S)$ to $UT_m(S)$ for $m < n$ by only considering the first $m$ rows and columns. Hence if $UT_n(S)$ is permutable then $UT_m(S)$ is permutable for all $m < n$.
\end{proof}

Our next objective is to show that matrix semigroups over a (not necessarily commutative) bipotent semiring with elements of infinite multiplicative order (or more generally, unbounded multiplicative order) are not, in general, permutable. A key tool is a result of Okni\'{n}ski \cite[Chapter 19, Lemma 22]{Okninski}, stating that a finitely generated inverse semigroup with infinitely many idempotents cannot be permutable. In particular this means
that the \textit{bicyclic monoid} is not permutable. This will combine with a representation
of the bicyclic monoid by tropical matrices, due to Izhakian and Margolis \cite{Zur2}, to yield non-permutability results for tropical matrix monoids, and then with our classification
of the monogenic bipotent semirings (Lemma~\ref{Ninside}) to obtain non-permutability results for matrix monoids over semirings with elements of infinite order. Some elementary model theory extends these results to semirings with unbounded order.

\begin{thm} \label{tripermute}
$M_n(\Nmax), \ M_n(\Nmin), UT_n(\Nmax)$ and $UT_n(\Nmin)$ are not strongly permutable for $n \geq 2$.
\end{thm}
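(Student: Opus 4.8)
The natural approach is to reduce to the bicyclic monoid via a known tropical representation. Recall that Izhakian and Margolis gave an embedding of the bicyclic monoid $B$ into $M_2(\T)$; the key point is that this embedding in fact lands in a matrix monoid over $\Nmax$ (or can be conjugated/rescaled to do so), since the matrices involved have integer entries. So first I would write down the explicit $2\times 2$ tropical matrices $P = \begin{pmatrix} 0 & 0 \\ -\infty & 1 \end{pmatrix}$ and $Q = \begin{pmatrix} 1 & -\infty \\ 0 & 0 \end{pmatrix}$ (up to the usual conventions) realising the bicyclic generators, verify that the submonoid they generate is isomorphic to $B$, and observe that all entries that ever arise lie in $\N \cup \{-\infty\}$ — so this is a submonoid of $UT_2(\Nmax)$ after a suitable choice, or at least of $M_2(\Nmax)$. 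Actually, since $P$ is upper triangular but $Q$ is not, for the $UT$ statement I need a genuinely upper-triangular faithful representation of $B$; I would either cite one or check that the standard one can be put in upper triangular form over $\Nmax$.

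**Key steps.**
(1) Exhibit a faithful representation $\varphi : B \hookrightarrow M_2(\Nmax)$, and separately one into $UT_2(\Nmax)$, writing out the generators and checking the defining relation $pq = 1$ (tropically, the identity $I_2$) together with faithfulness (the images of distinct words $q^i p^j$ are distinct, which follows by inspecting entries). (2) Invoke Okni\'nski's lemma: a finitely generated inverse semigroup with infinitely many idempotents is not permutable, hence $B$ — which is inverse with idempotents $q^i p^i$, $i \geq 0$, all distinct — is not strongly permutable. (3) Use the fact, noted in the excerpt, that strong permutability passes to subsemigroups: since $B$ embeds in $M_2(\Nmax)$ and in $UT_2(\Nmax)$, neither of these is strongly permutable. (4) Apply Proposition~\ref{dimensionchange} in the contrapositive direction — wait, that goes the wrong way; instead note that $M_2(\Nmax)$ embeds in $M_n(\Nmax)$ for $n \geq 2$ (block-diagonally, padding with a fixed small entry, exactly as in the proof of Proposition~\ref{dimensionchange}), and $UT_2(\Nmax)$ embeds in $UT_n(\Nmax)$ similarly, so non-permutability propagates upward to all $n \geq 2$. (5) For the $\Nmin$ statements, observe that $\Nmin$ is isomorphic to $\N$ under minimum and addition, i.e. to a "reversed" copy; equivalently, transpose-and-negate, or simply note $B$ is self-dual so the same style of $2\times 2$ representation works over $\Nmin$, giving the claim for $M_n(\Nmin)$ and $UT_n(\Nmin)$ too.

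**Main obstacle.**
The routine parts are the block-padding embedding and the appeal to Okni\'nski's lemma. The one place requiring genuine care is producing a \emph{faithful} representation of $B$ that simultaneously (a) uses only entries from $\Nmax$ (no arbitrary reals) and (b) in the triangular case is actually upper triangular — the off-diagonal asymmetry of $B$'s generators makes it not completely obvious that both generators can be realised by upper-triangular $\Nmax$-matrices, so I would need to either locate such a representation in the literature (Izhakian–Margolis or a variant) or argue that the bicyclic monoid embeds in $UT_2$ of the relevant semiring by a direct construction; failing an honest $UT_2$ embedding of $B$ itself, an alternative is to embed into $UT_n$ for some small $n$ a quotient-free copy, or to use a different finitely generated inverse semigroup with infinitely many idempotents that does sit inside $UT_2(\Nmax)$. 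Checking that the candidate matrices generate something with infinitely many distinct idempotents (rather than collapsing) is the concrete computation I would do carefully; everything downstream is formal.
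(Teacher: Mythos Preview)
Your overall strategy---use the Izhakian--Margolis representation of the bicyclic monoid $B$, invoke Okni\'nski's lemma, and propagate---is the same as the paper's, but there is a genuine gap at the point where you assert that the representation ``lands in $\N \cup \{-\infty\}$''. In fact $B$ does \emph{not} embed in $M_2(\Nmax)$ or $UT_2(\Nmax)$ at all. Since $\Nmax$ (natural numbers excluding $0$) has no multiplicative identity and every $a$ satisfies $a < a \otimes a$, any $M \in M_2(\Nmax)$ has $(M^2)_{1,1} = \max(2M_{1,1},\, M_{1,2}+M_{2,1}) \geq 2M_{1,1} > M_{1,1}$, so these matrix semigroups contain no idempotents whatsoever; the same holds for $UT_2(\Nmax)$ and (by a similar check on the $(1,2)$ entry) for $M_2(\Nmin)$ and $UT_2(\Nmin)$. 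Since $B$ has infinitely many idempotents, no embedding exists. Concretely, the Izhakian--Margolis matrices $\left(\begin{smallmatrix} i-j & i+j \\ -\infty & j-i \end{smallmatrix}\right)$ have negative diagonal entries whenever $i \neq j$, so they do not lie over $\Nmax$.

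The paper circumvents this by not embedding $B$ itself. From the embedding $B \hookrightarrow UT_2(\T)$ one extracts, for each fixed $k$, finitely many upper triangular matrices $M_1,\dots,M_k$ with integer entries such that $M_\sigma \neq M_e$ for every non-trivial $\sigma$. This \emph{finite} collection can then be tropically scaled by a large constant so that all finite entries become positive, yielding a witness to non-$k$-permutability inside $UT_2(\Nmax)$; scaling in the opposite direction gives $UT_2(\Nmin)$. For the full matrix semigroups a further step is needed (since $\Nmax$ lacks a zero, $UT_2(\Nmax)$ does not sit inside $M_2(\Nmax)$): one replaces the $-\infty$ below the diagonal by $1$ after scaling by a very large $\mu$, and checks that this perturbation does not affect the relevant product entries. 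Your step (5) via ``self-duality'' does not work as stated either, since $\Nmax$ and $\Nmin$ are non-isomorphic semirings; the paper handles $\Nmin$ by the same per-$k$ scaling trick rather than by any global isomorphism.
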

\begin{proof}
Let $\cB = \langle p,q \ | \ pq = 1 \rangle$ be the bicyclic monoid. Recall that every element of $\cB$ can be written as $q^ip^j$ for some $i,j \in \N \cup \{0\}$.
By \cite{Zur2} there is a semigroup embedding of $\cB$ into $UT_2(\T)$ given by
\[ \rho : \cB \rightarrow UT_2(\T), \quad q^ip^j \mapsto
\begin{pmatrix}
i - j & i+j \\
-\infty & j-i
\end{pmatrix}.
\]
Since the bicyclic monoid is not permutable \cite[Chapter 19, Lemma 22]{Okninski} and subsemigroups of permutable semigroups are permutable, we deduce that
$UT_2(\T)$ is not permutable. Indeed further, for every $k$ there are upper triangular matrices $M_1, \dots, M_k$ whose
diagonal and above-diagonal entries are integers, with the property that $M_\sigma \neq M_e$ for every non-trivial permutation $\sigma \in \S_k$.

If we fix an integer $\lambda$ strictly less then every integer appearing in these matrices, then the tropically scaled matrices
$(-\lambda) M_1, \dots, (-\lambda) M_k$ clearly also have this property. Replacing the $-\infty$ entry of these matrices with the zero element of $\Nmax^0$ yields a sequences of matrices to show that $UT_2(\Nmax)$ is not strongly permutable.
Similarly, tropically scaling $M_1, \dots, M_k$ by the negative of an integer greater than every entry yields a sequence of matrices
for each $k$ showing that $UT_2(\Nmin)$ is not strongly permutable.

It remains to establish the claims for full matrix semigroups. (Note that, since the semirings here lack zero elements, we do not have a natural embedding of each upper triangular matrix semigroup into the corresponding full matrix semigroup which would allow us to immediately deduce
the remaining claims.)

\par Let $k > 1$ and $M_1, \dots, M_k$ be as above. Choose a very large $\mu \in \N$, and let $N_1, \dots, N_k \in M_n(\Nmax)$ be obtained from $M_1, \dots, M_k$ by scaling tropically by $\mu$, and replacing the $-\infty$ below the diagonal with $1$. Now consider the product $N_\sigma$ for some $\sigma \in \S_k$, and in particular the computation of the $(x,y)$ entry for some $(x,y) \neq (2,1)$. A simple calculation shows that, provided $\mu$ was chosen large enough, the terms which do not feature the (2,1) entry of any $N_i$ will all exceed those which do, from which it follows that $(N_\sigma)_{x,y} = k \mu + (M_\sigma)_{x,y}$. Thus, we conclude that
$N_\sigma \neq N_e$. Since $k$ and $\sigma$ were arbitrary, this means that $M_n(\Nmax)$ is not strongly permutable.

\par Finally, tropically scaling the matrices $N_1, \dots, N_k$ by a sufficiently negative integer gives a sequence to show that $M_n(\Nmin)$ is not strong permutable
\end{proof}

\begin{lem} \label{unitriperm}
$U_n(\Nmax)$ is strongly permutable if and only if $n \leq 2$.
\end{lem}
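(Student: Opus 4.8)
The plan is to prove the two implications separately, with essentially all of the content in the ``only if'' direction. For $n \le 2$ there is nothing to prove: $U_1(\Nmax)$ is the trivial monoid and $U_2(\Nmax)$ is isomorphic to the commutative additive semigroup of $\Nmax$, so both are strongly permutable. For the converse I would first reduce to the case $n = 3$: exactly as in the upper triangular part of Proposition~\ref{dimensionchange}, restriction to the first three rows and columns is a surjective semigroup homomorphism $U_n(\Nmax) \to U_3(\Nmax)$, and strong permutability is inherited by homomorphic images, so it suffices to show that $U_3(\Nmax)$ is not strongly permutable.

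To show $U_3(\Nmax)$ is not strongly permutable I would exhibit, for every $k \ge 2$, matrices $M_1,\dots,M_k \in U_3(\Nmax)$ with $M_\sigma \ne M_e$ for every non-trivial $\sigma \in \cS_k$. Writing a typical element of $U_3(\Nmax)$ as $[a,b,c]$ (its $(1,2)$, $(1,3)$ and $(2,3)$ entries), one computes $[a,b,c]\,[a',b',c'] = [\max(a,a'),\ \max(b,b',a+c'),\ \max(c,c')]$, and an easy induction gives, for $M_i = [a_i,b_i,c_i]$ and $\sigma \in \cS_k$,
$$M_\sigma = \left[\, \max_i a_i,\ \ \max\!\left(\max_i b_i,\ \max_{\sigma^{-1}(i)<\sigma^{-1}(j)}(a_i+c_j)\right),\ \ \max_i c_i \,\right].$$
Only the $(1,3)$ entry depends on $\sigma$, so the task becomes: choose the $a_i,b_i,c_i$ so that $\max\!\big(\max_i b_i,\ \max_{\sigma^{-1}(i)<\sigma^{-1}(j)}(a_i+c_j)\big)$ takes a different value when $\sigma = e$ than for any other $\sigma$.

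My proposed choice is $M_i = [\, i,\ k+1,\ k+1-i \,]$ for $i = 1,\dots,k$; all entries are positive integers, hence elements of $\Nmax$. Here $\max_i b_i = k+1$, and for every ``forward'' pair $i < j$ we have $a_i+c_j = i + (k+1-j) = (k+1)-(j-i) \le k$, so the $(1,3)$ entry of $M_e$ equals $k+1$. Now let $\sigma \ne e$. The sequence $\sigma(1),\dots,\sigma(k)$ is not increasing, so it has an adjacent descent $\sigma(l) > \sigma(l+1)$; putting $j = \sigma(l)$ and $i = \sigma(l+1)$ we get $i < j$ and $\sigma^{-1}(j) < \sigma^{-1}(i)$, so $a_j+c_i$ is one of the terms defining the $(1,3)$ entry of $M_\sigma$, and $a_j + c_i = j + (k+1-i) = (k+1)+(j-i) \ge k+2$. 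Hence the $(1,3)$ entry of $M_\sigma$ is at least $k+2 > k+1$, so $M_\sigma \ne M_e$. As $k$ was arbitrary, $U_3(\Nmax)$, and therefore $U_n(\Nmax)$ for all $n \ge 3$, is not strongly permutable.

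The one genuinely delicate point is finding this family of matrices: the quantities $a_i+c_j$ form a rank-one array and so cannot be prescribed arbitrarily. The idea that makes the construction work is to let the common $b$-entry $B = k+1$ be a ``plateau'' value which the identity ordering exactly attains in the $(1,3)$ position (all forward products $a_i+c_j$ being at most $B-1$), while every non-trivial reordering is forced to turn some ``backward'' pair --- whose product is at least $B+1$ --- into a forward one, pushing the $(1,3)$ entry strictly above $B$. Once the matrices are chosen the remaining verifications are routine, and the reduction from general $n$ to $n = 3$ is just the $UT_n$ argument of Proposition~\ref{dimensionchange} applied verbatim to $U_n$.
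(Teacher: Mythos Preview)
Your proof is correct and follows essentially the same approach as the paper: the easy direction and the reduction to $n=3$ via the top-left $3\times 3$ projection are identical, and your family $M_i=[\,i,\ k+1,\ k+1-i\,]$ differs from the paper's $B_i$ (with $(1,2),(1,3),(2,3)$ entries $i,\ m,\ m+1-i$) only in the constant chosen for the $(1,3)$ entry. The concluding argument is also the same up to contraposition: you exhibit a descent forcing $(M_\sigma)_{1,3}>(M_e)_{1,3}$, while the paper assumes $B_\sigma=B_e$ and deduces $\sigma(j)<\sigma(k)$ for all $j<k$.
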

\begin{proof}
Recall that $U_1(\Nmax)$ is trivial while $U_2(\Nmax)$ is isomorphic to the (commutative) additive semigroup of the semiring, so both are 
strongly permutable. There exists a surjective morphism from $U_n(\Nmax)$ to $U_3(\Nmax)$ for all $n \geq 3$ by mapping to each matrix to its top-left corner 3 by 3 submatrix, %\textbf{[TODONE: Which one?]},
so it suffices to show that $U_3(\Nmax)$ is not strongly permutable.

\par So, we define the sequence of matrices $B_1, B_2, \dots, B_m$ by
\[ 
B_i = 
\begin{pmatrix}
0 & i & m \\
-\infty & 0  & m + 1 -i \\
-\infty &  -\infty & 0
\end{pmatrix}
\]
(Note that technically speaking $-\infty, 0 \notin \Nmax$; the ``$-\infty$'' and ``$0$'' featured here are technically the zero and identity elements adjoined in
$(\Nmax)^{01}$ which is used in the definition of the unitriangular matrix semigroup $U_3(\Nmax)$, but because this is essentially the same as the
subsemiring $\Nmax \cup \lbrace 0, -\infty \rbrace$ of $\T$ it is clearer to denote them in this way.)
A simple inductive argument shows that for each $k$,
\[ \prod_{i=1}^k B_i = 
\begin{pmatrix}
0 & k & m \\
-\infty & 0 & m \\
-\infty & -\infty  & 0
\end{pmatrix} \]
Now, suppose $\sigma \in \cS_m$ is such that $B_\sigma := \prod_{i=1}^m B_{\sigma(i)} = \prod_{i=1}^m B_i$.
By the definition of matrix multiplication, for any $j < k$ we must have
\[ m \ = \ (B_\sigma)_{1,3} \ \geq \ (B_{\sigma(j)})_{1,2} + (B_{\sigma(k)})_{2,3} \ = \ \sigma(j) + m+1 -\sigma(k) \]
and hence $\sigma(j) < \sigma(k)$. Since $\sigma$ is a permutation, this can only happen if $\sigma$ is the identity permutation. Further, as $m$ was arbitrary no non-trivial permutations preserve this product for any $m \in \N$, so $U_3(\Nmax)$ is not strongly permutable. 
\end{proof}

\begin{lem} \label{minunitriperm}
$U_n(\Nmin)$ is strongly permutable if and only if $n \leq 3$.
\end{lem}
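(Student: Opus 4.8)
The plan is to prove the two implications separately, the boundary between permutability and its failure lying between dimensions $3$ and $4$. For the positive direction ($n\le 3$): when $n=1$ the monoid $U_1(\Nmin)$ is trivial, and when $n=2$ it is isomorphic to the additive semigroup of $\Nmin$, which is commutative, so both are strongly permutable and the real content is the case $n=3$. I would start from the explicit description of products: writing the above‑diagonal entries of $B\in U_3(\Nmin)$ as $\alpha(B)=B_{1,2}$, $\beta(B)=B_{1,3}$, $\gamma(B)=B_{2,3}$, a routine induction shows that a product $B^{(1)}\cdots B^{(k)}$ has $(1,2)$‑entry $\max_i\alpha(B^{(i)})$ and $(2,3)$‑entry $\max_i\gamma(B^{(i)})$, both independent of the order, while its $(1,3)$‑entry is
\[
\max\!\Bigl(\max_i\beta(B^{(i)}),\ \max_{1\le i<j\le k}\bigl(\alpha(B^{(i)})+\gamma(B^{(j)})\bigr)\Bigr).
\]
So for fixed $B_1,\dots,B_k$ we have $B_\sigma=B_e$ precisely when $\sigma$ leaves this last quantity unchanged, and the task is to produce a non‑identity $\sigma\in\cS_k$ doing so, for a value of $k$ fixed in advance. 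The feature to exploit — absent over $\Nmax$ and $\T$ — is that every element of $\Nmin$ is at most $-1$, so the cross terms $\alpha(B^{(i)})+\gamma(B^{(j)})$ are uniformly bounded above; this forces the index pairs $(p,q)$ that a reordering could disturb into a rigid pattern, and I would argue that once $k$ exceeds a threshold depending only on how far the relevant cross terms sit below $\max_i\beta(B_i)$, these pairs cannot be totally ordered consistently with the factor indices, so a non‑trivial equalising permutation must exist.

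For the negative direction ($n\ge 4$): projection onto the top‑left $4\times 4$ block is a surjective homomorphism $U_n(\Nmin)\to U_4(\Nmin)$ for $n\ge 4$, and homomorphic images of strongly permutable semigroups are strongly permutable, so it suffices to show $U_4(\Nmin)$ is not strongly permutable. I would exhibit, for each $m\in\N$, matrices $B_1,\dots,B_m\in U_4(\Nmin)$ with $B_\sigma\neq B_e$ for every non‑trivial $\sigma$, mimicking the $U_3(\Nmax)$ construction of Lemma~\ref{unitriperm} one dimension up: take $(B_i)_{1,2}=i-m-1$, $(B_i)_{3,4}=-i$, $(B_i)_{2,3}=-1$, $(B_i)_{1,4}=-m-4$, and $(B_i)_{1,3}=(B_i)_{2,4}=-L$ for a constant $L$ far larger than $m$. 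The large negative entries $-L$ render the paths $1\to 3\to 4$ and $1\to 2\to 4$ irrelevant to the $(1,4)$‑entry of a product, while making the $(1,3)$‑ and $(2,4)$‑entries detect, respectively, whether $\sigma(m)=m$ and whether $\sigma(1)=1$; and the $(1,4)$‑entry of $B_\sigma$, now governed by the three‑step paths $1\to 2\to 3\to 4$, matches that of $B_e$ only when $\sigma(l)\ge\sigma(i)+2$ for all positions $i<l$ with $l\ge i+2$. An elementary argument — examining the smallest position moved by $\sigma$ — shows that $\sigma(1)=1$, $\sigma(m)=m$ and this gap condition together force $\sigma$ to be the identity. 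As $m$ is arbitrary, $U_4(\Nmin)$, hence $U_n(\Nmin)$ for all $n\ge 4$, is not strongly permutable.

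The hard part will be the $n=3$ case of the positive direction: $n\le 2$ is immediate and the negative direction is a concrete family plus bookkeeping with entry formulas, but proving that $U_3(\Nmin)$ \emph{is} strongly permutable — in sharp contrast with $U_3(\Nmax)$ and $U_3(\T)$ — is exactly the point where one must use that $\Nmin$, unlike $\Nmax$, is bounded above, and extract from that a uniform bound on $k$ that always leaves room for a non‑trivial equalising permutation.
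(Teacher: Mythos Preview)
There is a genuine problem, though it originates in a typo in the lemma statement rather than in your reasoning: the paper's own proof establishes that $U_n(\Nmin)$ is strongly permutable if and only if $n\le 2$, not $n\le 3$. Concretely, the paper shows that $U_3(\Nmin)$ is \emph{not} strongly permutable by exhibiting, for each $m$, the matrices
\[
C_i=\begin{pmatrix}0 & i-m-1 & -m-2\\ -\infty & 0 & -i\\ -\infty & -\infty & 0\end{pmatrix},\qquad 1\le i\le m,
\]
and checking that $(C_\sigma)_{1,3}=-m-2$ forces $\sigma(j)-m-1-\sigma(k)\le -m-2$, i.e.\ $\sigma(j)<\sigma(k)$, for all $j<k$, so $\sigma$ must be the identity. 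This is precisely the $U_3(\Nmax)$ construction of Lemma~\ref{unitriperm} shifted into negative integers; the threshold is $n=3$ in both cases, and indeed the downstream Lemma~\ref{inforder} and Theorem~\ref{unbounded} use exactly this.

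Consequently your programme for the ``hard part'' cannot succeed: $U_3(\Nmin)$ is not strongly permutable, and the heuristic you propose does not rescue it. It is true that the cross terms $\alpha(B^{(i)})+\gamma(B^{(j)})$ are bounded above by $-2$, but the $\beta$ terms are themselves bounded above by $-1$, and nothing prevents the cross terms from sitting exactly at the current maximum for the identity ordering while being lifted strictly above it by any nontrivial reordering --- which is exactly what the $C_i$ achieve (the identity gives $\max_{j<k}(\sigma(j)-\sigma(k))=-1$, any other $\sigma$ gives something $\ge 0$). No uniform $k$ exists. Your $n\ge 4$ construction, while plausibly correct on its own terms, is therefore aimed at the wrong threshold and is more elaborate than needed: the $3\times 3$ family above already handles $n=3$, and your projection argument (correct in spirit) then propagates non-permutability to all $n\ge 3$.
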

\begin{proof}
Much as in the previous proof, $U_1(\Nmin)$ is the trivial monoid while $U_2(\Nmin)$ is isomorphic to the (commutative) additive semigroup of the semiring, so both are clearly
strongly permutable, and there is a surjective morphism from $U_n(\Nmin)$ to $U_3(\Nmin)$ for all $n \geq 3$, so it suffices to show that $U_3(\Nmin)$ is not strongly permutable.
\par To this end we define the sequence of matrices $C_1, \dots, C_m$ given by
\[ 
C_i = 
\begin{pmatrix}
0 & i-m-1 & -m - 2 \\
-\infty & 0  & -i \\
-\infty & -\infty  & 0
\end{pmatrix}
\]
Once again, the $-\infty$ and $0$ here are formally speaking the zero and identity elements in $(\Nmin)^{01}$. The product of the first $k$ such matrices is inductively seen to be
\[ \prod_{i=1}^k C_i = 
\begin{pmatrix}
0 & k-m-1 & -m-2 \\
-\infty & 0 & -1 \\
-\infty & -\infty  & 0
\end{pmatrix} \]
Now, if $\sigma \in \cS_m$ is such that $C_\sigma := \prod_{i=1}^m C_{\sigma(i)} = \prod_{i=1}^m C_i$ then for any $j < k$,
\[ (C_\sigma)_{1,3} \ = \ -m-2 \ \geq \ (C_{\sigma(j)})_{1,2} + (C_{\sigma(k)})_{2,3} \ = \ \sigma(j) - m - 1 - \sigma(k) \]
so that $\sigma(j) < \sigma(k)$. Since $\sigma$ is a permutation, this can only happen if $\sigma$ is the identity permutation. Further, as $m$ was arbitrary no non-trivial permutations preserve this product for any $m \in \N$, so $U_3(\Nmin)$ is not strongly permutable. 
\end{proof}

\begin{lem} \label{inforder}
Let $S$ be a (not necessarily commutative) bipotent semiring. If $S$ has an element of infinite multiplicative order, then $M_n(S)$ and  $UT_n(S)$ are not strongly permutable for $n \geq 2$ and $U_{n}(S)$ is not strongly permutable if and only if $n \geq 3$.
\end{lem}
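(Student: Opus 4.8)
The plan is to reduce to the two concrete semirings $\Nmax$ and $\Nmin$, for which the relevant non-permutability statements are already available, and then transfer back to $S$ using the fact that strong permutability is inherited by subsemigroups and by homomorphic images. Let $a \in S$ be an element of infinite multiplicative order; since $a$ has infinitely many distinct powers it is not of finite order, so by Lemma~\ref{Ninside} (which does not require commutativity of $S$) the monogenic subsemiring $\langle a \rangle$ --- itself commutative, being generated by the commuting powers of $a$ --- is isomorphic either to $\Nmax$ (if $a < a^2$) or to $\Nmin$ (if $a^2 < a$). Because $\langle a \rangle$ is a subsemiring of $S$, there are natural embeddings of $M_n(\langle a \rangle)$, $UT_n(\langle a \rangle)$ and $U_n(\langle a \rangle)$ as multiplicative subsemigroups of $M_n(S)$, $UT_n(S)$ and $U_n(S)$ respectively. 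The one point needing a little care is the treatment of the adjoined zero (used in $UT_n$) and the purely notational identity (used in $U_n$): if $S$ already has a zero and/or identity then the symbols adjoined to $\langle a \rangle$ are sent to these --- in a bipotent semiring the identity is automatically additively idempotent and lies above the zero, so the partially-defined additions are respected --- and otherwise the adjunctions made to $\langle a \rangle$ are taken inside the corresponding adjunctions to $S$. In every case the products, and sums, of matrices of the relevant shapes are computed by formulas involving only operations internal to $\langle a \rangle$ together with the adjoined identity acting as an identity, so they agree whether evaluated over $\langle a \rangle$ or over $S$, and the embeddings are genuine.

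Granting the embeddings, the full-matrix and upper-triangular assertions follow at once: by Theorem~\ref{tripermute} none of $M_n(\Nmax)$, $M_n(\Nmin)$, $UT_n(\Nmax)$, $UT_n(\Nmin)$ is strongly permutable for $n \geq 2$, and any semigroup containing a subsemigroup that is not strongly permutable is itself not strongly permutable, so $M_n(S)$ and $UT_n(S)$ are not strongly permutable for $n \geq 2$. For the unitriangular case, $U_1(S)$ is the trivial monoid and $U_2(S)$ is isomorphic to the additive semigroup of $S$, which is commutative; both are therefore strongly permutable, which settles the ``only if'' direction of the equivalence. Conversely, for $n \geq 3$ the arguments of Lemmas~\ref{unitriperm} and~\ref{minunitriperm} show that $U_3(\Nmax)$ and $U_3(\Nmin)$ are not strongly permutable, and since restriction to the top-left $3 \times 3$ block realises $U_3(T)$ as a homomorphic image of $U_n(T)$ for any semiring $T$, it follows that $U_n(\Nmax)$ and $U_n(\Nmin)$ are not strongly permutable for every $n \geq 3$; hence $U_n(S)$, which contains a copy of whichever of these equals $U_n(\langle a \rangle)$, is not strongly permutable either. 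Combining the two directions, $U_n(S)$ fails to be strongly permutable precisely when $n \geq 3$.

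I do not expect a genuine obstacle here: the real content of the lemma is simply the reduction via Lemma~\ref{Ninside} to the previously treated cases, and the only slightly delicate point is verifying that the embeddings $UT_n(\langle a \rangle) \hookrightarrow UT_n(S)$ and $U_n(\langle a \rangle) \hookrightarrow U_n(S)$ genuinely respect matrix addition and multiplication under the conventions governing $S^0$ and $S^{01}$. Once that routine bookkeeping is dispatched, the statement follows by assembling Lemma~\ref{Ninside}, Theorem~\ref{tripermute}, and Lemmas~\ref{unitriperm} and~\ref{minunitriperm}.
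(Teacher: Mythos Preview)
Your proof is correct and follows essentially the same approach as the paper: reduce via Lemma~\ref{Ninside} to $\Nmax$ or $\Nmin$, then invoke Theorem~\ref{tripermute} and Lemmas~\ref{unitriperm} and~\ref{minunitriperm}, using that strong permutability passes to subsemigroups. You are in fact slightly more thorough than the paper in that you spell out the $n \leq 2$ direction for $U_n(S)$ and flag the bookkeeping around the adjoined zero and identity in $S^0$ and $S^{01}$, whereas the paper's proof simply asserts the embeddings and cites the earlier lemmas.
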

\begin{proof}
Suppose $a \in S$ has infinite order. Then by Lemma \ref{Ninside} we have that subsemiring generated by $a$ is isomorphic $\Nmax$ or $\Nmin$. Hence, $M_n(S)$ contains an embedded copy either of $M_n(\Nmax)$ or of $M_n(\Nmin)$; since neither of these are permutable for $n \geq 2$ by Theorem~\ref{tripermute}, $M_n(S)$ is not permutable for $n \geq 2$. Similarly, $UT_n(S)$ is not permutable for $n \geq 2$ using Theorem \ref{tripermute} and $U_{n}(S)$ is not permutable if and only if $n \geq 3$ using Lemma \ref{unitriperm} and Lemma \ref{minunitriperm}.
\end{proof}

A bipotent semiring (even a commutative one) may have elements of unbounded finite order, without having an element of infinite order. For example, we shall see below that the truncated tropical semiring $\T_{[0,1]}$ is such a semiring. Some basic model theory allows us to extend the above result to this case; we direct the reader unfamiliar with model theoretic techniques to for example \cite{Kirby}.

\begin{thm} \label{unbounded}
Let $S$ be a (not necessarily commutative) bipotent semiring with elements of unbounded multiplicative order (that is, such that for all $k \in \N$ there exists an $x \in S$ such that $x$ has multiplicative order greater than $k$). Then the semigroups $M_n(S)$ and $UT_n(S)$ are not strongly permutable for $n \geq 2$.  The semigroup $U_{n}(S)$ is not strongly permutable if and only if $n \geq 3$.
\end{thm}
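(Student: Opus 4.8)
The plan is to deduce this from Lemma~\ref{inforder} by a compactness (equivalently, ultrapower) argument. The point to make precise first is that, for each fixed $n$ and each fixed $k\ge 2$, the property ``$M_n(S)$ is strongly permutable with parameter $k$'' is a single first-order sentence about $S$ in the language $\{+,\cdot\}$ of semirings, and likewise for $UT_n(S)$ and $U_n(S)$. For this I would observe that each of these matrix semigroups is uniformly interpretable in $(S,+,\cdot)$: a full matrix is an $n^2$-tuple of elements of $S$ with multiplication given by the semiring term $c_{ij}=\sum_l a_{il}b_{lj}$, and an upper (uni)triangular matrix may be identified with the tuple of its on-and-above-diagonal (respectively strictly-above-diagonal) entries, every entry of a product again being a semiring term in those entries (the $0$s below the diagonal, and in the $U_n$ case the $1$s on the diagonal, never need to be named, since in a product they occur only as absorbed or identity factors). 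Since $\cS_k$ is finite, the existential quantifier over permutations in the definition of strong permutability becomes a finite disjunction, so
\[
\forall X_1\cdots\forall X_k\ \bigvee_{\sigma\in\cS_k}\ X_{\sigma(1)}\cdots X_{\sigma(k)}=X_1\cdots X_k
\]
unwinds, via the interpretation, into a first-order sentence of $S$; I will write $\psi_{n,k}$ for its negation (which says: there exist $n\times n$ matrices of the relevant shape with $M_\sigma\ne M_e$ for all non-trivial $\sigma\in\cS_k$).

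Next I would build an elementarily equivalent semiring with an element of infinite multiplicative order. Adjoin a fresh constant symbol $c$ and consider the theory $T=\mathrm{Th}(S)\cup\{\theta_k:k\in\N\}$, where $\theta_k$ is the first-order sentence asserting that $c,c^2,\dots,c^k$ are pairwise distinct. Every finite subset of $T$ is contained in $\mathrm{Th}(S)\cup\{\theta_{k_0}\}$ for some $k_0$, and is satisfied in $S$ itself once $c$ is interpreted as an element of multiplicative order exceeding $k_0$ — such an element exists precisely by the unbounded-order hypothesis. Compactness then yields a model $S^{*}$ of $T$. Being a model of $\mathrm{Th}(S)$, $S^{*}$ is elementarily equivalent to $S$; in particular it is again a bipotent semiring (bipotence being the sentence $\forall x\forall y\,(x+y=x\vee x+y=y)$), and $c^{S^{*}}$ has infinitely many distinct powers, i.e.\ infinite multiplicative order.

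Then I would apply Lemma~\ref{inforder} to $S^{*}$: it gives that $M_n(S^{*})$ and $UT_n(S^{*})$ are not strongly permutable for $n\ge2$ and $U_n(S^{*})$ is not strongly permutable for $n\ge3$; equivalently, $S^{*}\models\psi_{n,k}$ for all relevant $n$ and all $k\ge2$. By elementary equivalence $S\models\psi_{n,k}$ for all such $n,k$, whence $M_n(S)$ and $UT_n(S)$ are not strongly permutable for $n\ge2$ and $U_n(S)$ is not strongly permutable for $n\ge3$. For the remaining direction of the $U_n$ statement, $U_1(S)$ is the trivial monoid and $U_2(S)\cong(S,+)$ is a commutative (semilattice) semigroup, so both are strongly permutable; hence $U_n(S)$ fails to be strongly permutable exactly for $n\ge3$, as claimed.

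The argument is short once set up, and the only delicate point — the place where it could go wrong if handled carelessly — is the logical bookkeeping in the first paragraph: ``being strongly permutable'', with its outermost existential quantifier over $k$, is \emph{not} itself a first-order property, so it is not directly transferable. What rescues the argument is that its negation is a countable conjunction of genuinely first-order sentences $\psi_{n,k}$, each of which is preserved verbatim under elementary equivalence; so it is precisely ``not strongly permutable'' that passes from $S^{*}$ back to $S$.
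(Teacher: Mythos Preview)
Your argument is correct and essentially the same as the paper's: both pass to an elementarily equivalent bipotent semiring with an element of infinite order (you via compactness with a new constant, the paper via realising a $1$-type in an elementary extension), apply Lemma~\ref{inforder} there, and transfer back using that $k$-permutability of each matrix semigroup is a first-order sentence about $S$. One small slip: in your displayed formula the disjunction should range over $\sigma\in\cS_k\setminus\{e\}$ (as your definition of $\psi_{n,k}$ immediately afterwards makes clear).
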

\begin{proof}
Consider the set of first-order sentences in the language of semirings:
$$L \ = \ \lbrace x^m \neq x^n \mid m, n \in \mathbb{N}, m \neq n \rbrace$$
where $x$ is a variable and $x^m$ is shorthand for the product of $m$ copies of $x$.
Since $S$ has elements of
unbounded order, $L$ is finitely satisfiable (every finite subset of $L$ holds for some $x \in S$) which means that $L$ is a $1$-type of $S$.

\par By realisability of types (see for example \cite[Lemma 23.6]{Kirby}) there exists an elementary extension of $S$ (a
structure containing $S$ and satisfying exactly the same first-order theory) in which $L$ is satisfiable, that is, in which there is an
element $x$ satisfying all of the sentences in $L$. Let $T$ be such a structure and $x \in T$ such an element.
The axioms for a bipotent semiring are clearly all expressible as first-order sentences, so the structure $T$ is itself a
bipotent semiring. Moreover, since $x$ satisfies all sentences in $L$, $x$ is an element of infinite order, and so by
Lemma~\ref{inforder} we deduce that $M_n(T)$ is not permutable.

\par Now suppose for a contradiction that $M_n(S)$ was permutable. This means there exists an $m$ such that
\[\forall X_1,\dots,X_m \in M_2(S), \bigvee_{\sigma \in \cS_m \setminus \{1_m\}} X_1\cdots X_m = X_{\sigma(1)}\cdots X_{\sigma(m)}.\]
Since matrix multiplication is first-order definable in the language of semirings, this can clearly be re-expressed as a first-order sentence over $S$,
featuring $m n^2$ universally quantified scalar variables corresponding to the entries of the $m$ matrices.  But $T$ is elementary equivalent to $S$, so this
sentence also holds in $T$, which contradicts the fact that $M_n(T)$ is not permutable.

Near-identical arguments show that $UT_n(S)$ is not permutable for $n \geq 2$ and that $U_{n}(S)$ is not permutable for $n \geq 3$.
Finally, recall that $U_1(S)$ is trivial while $U_2(S)$ is isomorphic to the additive semigroup of $S$, which is always commutative and hence strongly permutable.
\end{proof}

Recall that $M_1(S) = UT_1(S)$ is isomorphic to the multiplicative semigroup of the semiring $S$. This may be permutable (for example when the semiring is commutative) or non-permutable (for example when $S$ is a non-commutative free monoid with a bipotent addition given by the shortlex total ordering). 
\begin{cor}
Let $S$ be a commutative bipotent semiring with unbounded order. Then $M_n(S)$ (and $UT_n(S)$) are strongly permutable if and only if $n = 1$.
\end{cor}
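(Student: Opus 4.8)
This is essentially immediate from the preceding results, so the plan is simply to assemble them. For the forward direction, suppose $n \geq 2$; then since $S$ has unbounded multiplicative order, Theorem~\ref{unbounded} applies directly and tells us that $M_n(S)$ and $UT_n(S)$ are not strongly permutable. Hence strong permutability of either semigroup forces $n = 1$.

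For the converse, I would recall that $M_1(S) = UT_1(S)$ is isomorphic to the multiplicative semigroup of $S$, as noted in the discussion of matrix semigroups in Section~1. Since $S$ is \emph{commutative}, this multiplicative semigroup is commutative, and every commutative semigroup is strongly permutable (take $k = 2$: for any $s_1, s_2$ the transposition $\sigma = (1\,2)$ satisfies $s_{\sigma(1)} s_{\sigma(2)} = s_2 s_1 = s_1 s_2$), as already observed in the introduction. Thus $M_1(S)$ and $UT_1(S)$ are strongly permutable.

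There is genuinely no obstacle here: the corollary is a packaging of Theorem~\ref{unbounded} together with the trivial fact that commutative semigroups are strongly permutable, with the mild bookkeeping point that $M_1(S) = UT_1(S)$ identifies both $n=1$ cases with the multiplicative semigroup of $S$. The only thing worth being careful about is that commutativity of $S$ is used exactly once, precisely in the $n = 1$ case — consistent with the remark just before the corollary that for non-commutative $S$ even $M_1(S)$ may fail to be permutable.
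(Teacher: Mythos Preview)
Your proposal is correct and matches exactly the implicit argument in the paper: the corollary is stated there without proof, immediately following Theorem~\ref{unbounded} and the remark that $M_1(S) = UT_1(S)$ is the multiplicative semigroup of $S$ (permutable when $S$ is commutative). You have simply made explicit the two ingredients the paper intends the reader to combine.
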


Recall that a \textit{semifield} is a commutative semiring, possibly without zero, where the non-zero elements form a group with multiplication. In the case of semifields, we can now give an explicit description of when the matrix semigroups are permutable.
\begin{cor} \label{perm}
Let $S$ be a bipotent semifield. Then $M_n(S)$ and $UT_n(S)$ are permutable for $n \geq 2$ (and
$U_{n}(S)$ is permutable for $n \geq 3$) if and only if $S$ is the 2-element boolean semifield.
\end{cor}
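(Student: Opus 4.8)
The plan is to deduce both directions from Lemma~\ref{inforder}, the one genuine ingredient being the elementary fact that a non-trivial linearly ordered abelian group is torsion-free, so automatically contains an element of infinite order.

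For the ``if'' direction I would simply observe that the $2$-element boolean semifield $\B$ is finite, and that since $\B$ already possesses a zero and an identity we have $\B^0 = \B$ and $\B^{01} = \B$; hence $M_n(\B)$, $UT_n(\B)$ and $U_n(\B)$ are all finite semigroups, and every finite semigroup is strongly permutable.

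For the ``only if'' direction, let $S$ be a bipotent semifield and write $G$ for its group of non-zero elements (which is all of $S$ if $S$ has no zero). The order on $S$ restricts to a linear order on $G$ compatible with the multiplication, so $G$ is a linearly ordered abelian group; in particular multiplication by any element of $G$ is a strictly order-preserving bijection. Suppose $G$ is non-trivial and choose $g \in G$ with $g \neq 1$; replacing $g$ by $g^{-1}$ if necessary we may assume $g > 1$, and then $1 < g < g^2 < \cdots$ are pairwise distinct, so $g$ has infinite multiplicative order in $S$. Lemma~\ref{inforder} then gives that $M_n(S)$ and $UT_n(S)$ are not strongly permutable for $n \geq 2$, and that $U_n(S)$ is not strongly permutable for $n \geq 3$, contradicting the hypothesis. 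Therefore $G$ is trivial, so $S$ has at most one non-zero element; being a (non-trivial) semifield, $S$ then consists of a zero together with an identity ordered $0 < 1$, which is precisely the boolean semifield $\B$.

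The argument is essentially bookkeeping on top of Lemma~\ref{inforder}; the only substantive step is producing the element of infinite order, which the ordered-group structure supplies for free, so I anticipate no real obstacle. The one subtlety worth recording is the degenerate one-element semifield $\{1\}$ with no zero, over which every $M_n$, $UT_n$ and $U_n$ is trivial and hence vacuously permutable; this case is excluded by the usual convention (adopted here) that a semifield, like a field, has distinct zero and identity.
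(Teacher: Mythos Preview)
Your proof is correct and follows essentially the same approach as the paper: both directions reduce to showing that a bipotent semifield other than $\B$ contains an element of infinite multiplicative order, then invoking Lemma~\ref{inforder}, with the boolean case handled by finiteness. The only cosmetic difference is that the paper obtains the infinite-order element via Lemma~\ref{order} (finite order forces period~$1$, hence idempotence, hence the identity in a group), whereas you argue directly from strict monotonicity of multiplication in a linearly ordered group; these are equivalent one-line observations.
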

\begin{proof}
Since $S$ is a bipotent semiring we have that every element has infinite order or period 1 by Lemma \ref{order}. However $S$ is a semifield, so the non-zero elements form a group with multiplication so the only possible elements of period 1 are the identity and the zero if there is one. Thus, non-identity, non-zero elements are of infinite order. Therefore if $S$ is not the 2-element boolean semifield, it must have an element of infinite order and thus by Theorem \ref{unbounded} (or Lemma~\ref{inforder}), $M_n(S)$ and $UT_n(S)$ are not permutable for $n \geq 2$ and $U_{n}(S)$ is not permutable for $n \geq 3$. If $\B$ is the 2-element boolean semifield then $M_n(\B), UT_n(\B)$, and $U_n(\B)$ are finite and hence permutable for all $n \in \N$.
\end{proof}

\begin{thm} \label{kerperm}
Suppose $S$ is a (not necessarily commutative or bipotent) semiring with the following property: for every finite
subset $X$, there exists a homomorphism to a finite semiring of order bounded by a function in the size of $X$ such that each element of $X$ occupies its own singleton kernel class. Then $M_n(S)$ is permutable for all $n \in \N$.
\end{thm}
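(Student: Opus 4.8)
The plan is to reduce the statement to the elementary fact that \emph{finite} semigroups are permutable, using the size–controlled finite semiring quotients supplied by the hypothesis and being careful about one coupled quantitative issue: which permutation to use and on which elements the quotient must be faithful.

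\smallskip
\noindent\textbf{Step 1 (a structured form of permutability for finite semigroups).} Let $G$ be a finite semigroup of order $m$ and $g_1,\dots,g_k\in G$ with $k\ge 2m+1$. Among the first $2m+1$ partial products $g_1,\ g_1g_2,\ \dots,\ g_1\cdots g_{2m+1}$, the pigeonhole principle forces some value to occur at least three times, say $g_1\cdots g_a=g_1\cdots g_b=g_1\cdots g_c$ with $1\le a<b<c\le 2m+1$. Put $P=g_1\cdots g_a$, $U=g_{a+1}\cdots g_b$, $V=g_{b+1}\cdots g_c$, $W=g_{c+1}\cdots g_k$. Then $PU=P$, hence $PUV=PU=P$ and $PV=(PU)V=PUV=P$, so
$$ g_1\cdots g_k \;=\; PUVW \;=\; PW \;=\; PVUW \;=\; g_\sigma , $$
where $\sigma\in\cS_k$ is the block transposition interchanging the index intervals $(a,b]$ and $(b,c]$, which is non-trivial since $a<b<c$. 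Thus $G$ is permutable; crucially, the witnessing permutation is always a block transposition supported on an initial segment, and it is dictated solely by coincidences among \emph{initial segments} of the product.

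\smallskip
\noindent\textbf{Step 2 (the reduction).} Fix $n$ and let $k$ be chosen (large) at the end. Given $M_1,\dots,M_k\in M_n(S)$, let $X\subseteq S$ be the finite set of all entries of the partial products $M_1\cdots M_i$ for $1\le i\le k$. By hypothesis there is a homomorphism $\phi\colon S\to F$ onto a finite semiring, injective on $X$, with $|F|$ bounded by a fixed function of $|X|\le kn^2$; extend $\phi$ entrywise to the semigroup homomorphism $\Phi\colon M_n(S)\to M_n(F)$. Apply Step~1 to $\Phi(M_1),\dots,\Phi(M_k)$ in the finite semigroup $M_n(F)$: provided $k\ge 2\,|M_n(F)|+1$, we obtain $1\le a<b<c\le k$ with $\Phi(M_1\cdots M_a)=\Phi(M_1\cdots M_b)=\Phi(M_1\cdots M_c)$. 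Here is the point of including the partial products in $X$: the three matrices $M_1\cdots M_a,\ M_1\cdots M_b,\ M_1\cdots M_c$ have all their entries in $X$, and $\phi$ is injective on $X$, so these equalities already hold in $M_n(S)$. Running the computation of Step~1 now inside $M_n(S)$ gives $M_\sigma=M_1\cdots M_k$ for the associated non-trivial block transposition $\sigma$; hence $M_n(S)$ is $k$-permutable.

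\smallskip
\noindent\textbf{The main obstacle.} The one genuinely delicate point — and the only place the quantitative part of the hypothesis is used — is the \emph{uniform} choice of $k$. Step~2 requires $k\ge 2\,|M_n(F)|+1$, while $|M_n(F)|=|F|^{n^2}$ and $|F|\le f(kn^2)$ for the fixed function $f$ furnished by the hypothesis; so one needs a single $k$ with $k\ge 2\,f(kn^2)^{n^2}+1$. Resolving this recursion is exactly where the boundedness of the quotient's order in terms of $|X|$ enters (one may, if helpful, shrink $X$ to the entries of the partial products only up to the collision depth $2\,|M_n(F)|+1$, trading off against the self-referential bound on $|M_n(F)|$). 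I expect this parameter bookkeeping, rather than any conceptual difficulty, to be the crux of turning the sketch into a complete proof.
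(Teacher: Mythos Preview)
Your sketch correctly identifies the circularity in Step~2 as the crux, but this is a \emph{genuine gap}, not just bookkeeping: the hypothesis places no growth restriction on the bounding function $f$, and if (say) $f(m)=2^m$ then the inequality $k\ge 2f(kn^2)^{n^2}+1$ has no solution whatsoever. Your suggested fix of ``shrinking $X$ to the collision depth'' does not escape this, since the collision depth is itself $2|M_n(F)|+1$, which already depends on $f(|X|)$.

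The paper sidesteps the circularity entirely by a single clean observation you are missing: one does not need $\phi$ to separate the \emph{partial products}, only the entries of the \emph{final product} $\Sigma=A_1\cdots A_m$. That set $X$ has size at most $n^2$, a constant independent of $m$, so $|F|\le f(n^2)=:k$ and one may simply take $m=k^{n^2}+1$. The pigeonhole is then applied not to partial products but to the \emph{factors} themselves: since $|M_n(F)|\le k^{n^2}<m$, two distinct factors $A_i,A_j$ have the same image under the induced map $\psi:M_n(S)\to M_n(F)$, so the transposition $\sigma=(i\ j)$ satisfies $\psi(A_\sigma)=\psi(\Sigma)$. Because the entries of $\Sigma$ each lie in their own $\phi$-kernel class, $\Sigma$ lies in its own $\psi$-kernel class, and hence $A_\sigma=\Sigma$. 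This avoids any need for block transpositions or triple collisions, and eliminates the self-reference in the choice of $k$.
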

\begin{proof}
Let $k$ be such that for every subset $X$ of $S$ with $|X| = n^2$, there is a homomorphism from $S$ to a finite semiring of size at most $k$
such that each element of $X$ occupies its own kernel class. Let $m = k^{n^2} +1$, and suppose
\[ \Sigma =
A_1A_2 \cdots A_m = 
\begin{pmatrix}
x_{1,1} & \dots & x_{1,n} \\
\vdots & \ddots  & \vdots \\
x_{n,1} & \dots & x_{n,n}
\end{pmatrix}
\]
By assumption we may choose a semiring homomorphism $\phi$ mapping $S$ into a semiring $F$ of cardinality at most $k$, such that each $x_{i,j}$ occupies its own kernel class. From this semiring homomorphism, we define a semigroup homomorphism $\psi$ mapping $M_n(S)$ into $M_n(F)$ where
\[ {(\psi(A))}_{i,j} = \phi(A_{i,j}) \text{ for all } i,j.\]
Notice that, since the entries of $\Sigma$ each occupy their own $\phi$-kernel class, $\Sigma$ occupies its own $\psi$-kernel class. 
Since $F$ has cardinality at most $k$, $M_n(F)$ has cardinality at most $k^{n^2} < m$, so there must exist distinct $i$ and $j$ with $\psi(A_i) = \psi(A_j)$. Let $\sigma \in \mathcal{S}_m$ be the transposition swapping $i$ and $j$. Then clearly
$$\psi(A_{\sigma(1)} \dots A_{\sigma(m)}) = \psi(A_{\sigma(1)}) \dots \psi(A_{\sigma(m)}) = \psi(A_1) \dots \psi(A_m) = \psi(\Sigma),$$
which since $\Sigma$ occupies its own $\psi$-kernel class means that
$$A_{\sigma(1)} \dots A_{\sigma(m)} \ = \ \Sigma \ = \ A_1 \dots A_m,$$
as required to show that $M_n(S)$ is permutable.
\end{proof}
Recall that we say a binary relation $\cong$ on a semiring is a \emph{congruence} if $\cong$ is an equivalence relation and if $a \cong b$ and $c \cong d$ together imply that $ac \cong bd$ and $a+c \cong b+d$.
\begin{cor}\label{chains}
Let $S$ be a chain semiring (that is, a totally ordered set with operations maximum and minimum). Then $M_n(S)$ is permutable for all $n \in \N$.
\end{cor}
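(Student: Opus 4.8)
The plan is to verify that chain semirings satisfy the hypothesis of Theorem~\ref{kerperm}, so that permutability of $M_n(S)$ follows immediately. That is, given a finite subset $X \subseteq S$, I need to produce a homomorphism from $S$ onto a finite chain semiring (of size controlled by $|X|$) under which every element of $X$ lands in its own singleton kernel class.

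The natural construction is to collapse $S$ along the ``gaps'' between consecutive elements of $X$. Write $X = \{a_1 < a_2 < \dots < a_r\}$. Partition $S$ into the intervals $(-\infty, a_1], (a_1, a_2], \dots, (a_{r-1}, a_r], (a_r, +\infty)$ (the first and last being possibly empty if $S$ has extreme elements, which is harmless), and let $\phi$ send each element of $S$ to the block containing it. The quotient set is a chain $F$ of size at most $r+1 = |X|+1$, totally ordered in the obvious way, and hence itself a chain semiring. The key point is that $\phi$ is a semiring homomorphism: since both operations on a chain semiring are $\max$ and $\min$, which are determined purely by the order, and $\phi$ is order-preserving and respects the partition boundaries, we have $\phi(\max(x,y)) = \max(\phi(x),\phi(y))$ and $\phi(\min(x,y)) = \min(\phi(x),\phi(y))$ — this is a short, routine check that the block of $\max(x,y)$ is the larger of the two blocks, and similarly for $\min$. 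Finally, each $a_i$ is the unique maximum of its block $(a_{i-1},a_i]$, so distinct elements of $X$ lie in distinct blocks, i.e. each $a_i$ occupies its own singleton $\phi$-kernel class.

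With this, the hypothesis of Theorem~\ref{kerperm} holds with the bounding function $t \mapsto t+1$, and the conclusion that $M_n(S)$ is permutable for all $n$ is immediate.

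I do not anticipate a genuine obstacle here; the only mild subtlety is confirming that $\phi$ genuinely preserves both operations (as opposed to merely being monotone), but for $\max$/$\min$ operations on a totally ordered set this is straightforward, and the interval-based definition of $\phi$ was chosen precisely so that boundaries between blocks are respected. One should also note in passing that a chain semiring need not have a zero or identity, but Theorem~\ref{kerperm} and its proof do not require these, so nothing additional is needed.
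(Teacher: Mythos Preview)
Your approach is the same as the paper's---verify the hypothesis of Theorem~\ref{kerperm} by constructing a finite quotient of $S$ separating the elements of $X$---but there is a genuine gap in your construction as stated.

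The hypothesis of Theorem~\ref{kerperm} requires that each element of $X$ occupy its own \emph{singleton} kernel class, i.e.\ $\phi^{-1}(\phi(a_i)) = \{a_i\}$. Your partition into half-open intervals $(a_{i-1}, a_i]$ does not achieve this: the block containing $a_i$ typically contains many other elements of $S$. What you verify (``distinct elements of $X$ lie in distinct blocks'') is only that $\phi$ is injective on $X$, which is strictly weaker. In the proof of Theorem~\ref{kerperm} the singleton-class condition is what guarantees that $\psi(A_\sigma) = \psi(\Sigma)$ forces $A_\sigma = \Sigma$; mere injectivity on $X$ does not suffice, since an entry of $A_\sigma$ could land in the same block as the corresponding entry of $\Sigma$ without being equal to it.

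The fix is immediate: refine your partition so that each $a_i$ is its own singleton class, with the remaining classes being the \emph{open} intervals $(-\infty, a_1)$, $(a_1, a_2)$, \dots, $(a_r, +\infty)$. This is precisely what the paper does (phrased as an equivalence relation: $a \equiv b$ iff $a = b$, or $a,b \notin X$ and they lie above the same elements of $X$), yielding at most $2|X|+1$ classes. The congruence check for $\max$ and $\min$ goes through exactly as you describe.
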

\begin{proof}
Let $X = \lbrace x_1, \dots, x_k \rbrace$ be a finite subset of $S$. Define a binary relation $\equiv$ on $S$ by $a \equiv b$ if and only if $a$ and $b$ either (i) are equal or (ii) are not in $X$ and lie above exactly the same elements of $X$. Recalling that $S$ is totally ordered, it is easy to see that $\equiv$ is an equivalence relation with at most $2|X| + 1$ classes (being the singleton sets containing elements of $X$, and the open order intervals above, below and between elements of $X$), in which each element of $X$ occupies its own equivalence class. Further, it can be readily seen that $\equiv$ is a congruence. Hence, by the usual first isomorphism theorem, the natural morphism
$S \to S / \equiv$ satisfies the conditions of Theorem~\ref{kerperm}.
\end{proof}
\section{Truncated Tropical Semirings}

In this section we shall illustrate some of the ``wilder'' behaviour which is possible in commutative bipotent semirings, by studying truncated
tropical semirings. To avoid confusion with classical operations, which we shall also need, we use the symbols $\oplus$ and $\otimes$ for the 
denote the addition (maximum) and multiplication (truncated addition) operations in a truncated tropical semiring. The symbol $+$ and juxtaposition
will be used for standard arithmetic addition and multiplication of real numbers, respectively.
We begin by observing that there are a number of isomorphisms between semirings in this class:
\begin{thm} \label{trunciso}
Let $y > x \geq 0$. Then 
\[\T_{[x,y]} \ \cong \ 
\begin{cases} 
\T_{[0,1]} \ \ &\textrm{ if } x = 0; \\
\T_{[1,2]} \ \ &\textrm{ if } x > 0 \textrm{ and } y \leq 2x; \\
\T_{[1,2.5]} \ \ &\textrm{ if } x > 0 \textrm{ and } 2x < y < 3x; \\
\T_{[1,\frac{y}{x}]} &\textrm{ if } x > 0 \textrm{ and } y \geq 3x. 
\end{cases}
\]
The semirings $\T_{[0,1]}, \T_{[1,2]}, \T_{[1,2.5]}$ and $\T_{[1,y]}$ for $y \geq 3$ are pairwise non-isomorphic.
\end{thm}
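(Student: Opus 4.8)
The plan is to proceed in three stages: reduce $\T_{[x,y]}$ to a short list of normal forms by scaling, classify those, and then establish the non-isomorphisms. The fact I will use throughout is that a semiring isomorphism between bipotent semirings is automatically an order isomorphism, since the order is definable by $a\le b\iff a\oplus b=b$; applied here, any isomorphism between two $\T_{[x,y]}$-type semirings fixes $-\infty$, fixes the adjoined $0$ (the unique cover of $-\infty$ when the left endpoint is positive), and restricts to a strictly increasing --- hence continuous --- bijection between the real interval parts that sends endpoints to endpoints. For the reduction, $a\mapsto a/y$ on $[0,y]$ is an isomorphism $\T_{[0,y]}\cong\T_{[0,1]}$, and for $x>0$ the map $a\mapsto a/x$ on $[x,y]$ (with $0\mapsto0$) is an isomorphism $\T_{[x,y]}\cong\T_{[1,\frac{y}{x}]}$; in both cases multiplication is respected because $\min(a+b,y)/x=\min(a/x+b/x,y/x)$. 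So it suffices to classify $\T_{[1,z]}$ for $z>1$, and the four cases of the theorem are precisely $z=y/x$ lying in $(1,2]$, in $(2,3)$, or in $[3,\infty)$, together with the separate case $x=0$.

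Next I would settle the two ``collapsing'' ranges. For $1<z\le2$ the product on the interval $[1,z]$ is \emph{identically} $z$, because $a\otimes b=\min(a+b,z)$ and $a+b\ge2\ge z$; hence any strictly increasing continuous bijection $[1,z]\to[1,2]$, extended by fixing $-\infty$ and $0$, is a semiring isomorphism onto $\T_{[1,2]}$ (every nontrivial product is the top element on both sides). For $2<z<3$ I would write down an explicit piecewise-affine isomorphism onto $\T_{[1,2.5]}$: here untruncated addition $a\otimes b=a+b$ only occurs for $a,b\in[1,z-1)$, while for any $a\in[z-1,2]$ and any $b\ge1$ the product $a\otimes b$ already equals $z$, which leaves the middle interval $[z-1,2]$ completely unconstrained. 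Concretely, for $2<z,z'<3$ set $\beta=(z'-2)/(z-2)$ and define $\phi(t)=1+\beta(t-1)$ on $[1,z-1]$, $\phi(v)=2+\beta(v-2)$ on $[2,z]$, and interpolate linearly on $[z-1,2]$; checking this is a well-defined increasing bijection $[1,z]\to[1,z']$ that respects $\otimes$ is routine. Pulling these isomorphisms back through the scalings gives the four stated cases.

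For the non-isomorphisms, $\T_{[0,1]}$ is separated from every $\T_{[1,z]}$ ($z>1$) on order-type grounds alone: its chain has a single covering pair $(-\infty,0)$, whereas each $\T_{[1,z]}$ has two, namely $(-\infty,0)$ and $(0,1)$. Among the $\T_{[1,z]}$, the maximum multiplicative order of an element is $\lceil z\rceil$ (the order of $a\in[1,z]$ being $\lceil z/a\rceil$, maximal at $a=1$), which isolates $\T_{[1,2]}$. Everything else follows from one rigidity statement: \emph{if $\phi:\T_{[1,Y]}\to\T_{[1,Y']}$ is an isomorphism with $Y\ge3$, then $Y'=Y$}; applying it (or its inverse) whenever one of the two numbers is at least $3$ separates $\T_{[1,2.5]}$ from all $\T_{[1,y]}$ with $y\ge3$ and separates those from one another.

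To prove the rigidity statement: from $\phi(\min(a+b,Y))=\min(\phi(a)+\phi(b),Y')$, with $\phi$ strictly increasing, $\phi(1)=1$ and $\phi(Y)=Y'$, one first obtains $\phi(a+b)=\phi(a)+\phi(b)$ whenever $a,b\in[1,Y]$ and $a+b<Y$ (else the left side would be a non-top element equal to the truncated right side). Taking $a,b\in[1,Y/2)$ yields Jensen's equation on $[1,Y/2)$, so monotonicity forces $\phi(t)=1+\beta(t-1)$ there for some $\beta>0$, whence $\phi(v)=2\phi(v/2)=2+\beta(v-2)$ for $v\in[2,Y)$. The remaining middle interval $[Y/2,2)$ is nonempty only when $Y<4$, and then $Y\ge3$ saves the day: for $a\in[Y/2,2)$ pick $b\in[1,\min(Y/2,Y-a))$, nonempty because $Y-a>Y-2\ge1$, and solve $\phi(a)=\phi(a+b)-\phi(b)=1+\beta(a-1)$. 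Thus $\phi(t)=1+\beta(t-1)$ on all of $[1,2)$; continuity gives $\phi(2)=1+\beta$, while $\phi(2)=\phi(1\otimes1)=2$, so $\beta=1$ and $Y'=\phi(Y)=2+\beta(Y-2)=Y$. The delicate part --- and where the write-up must be careful --- is exactly why $z=3$ is the threshold: below it the truncation frees the middle interval and all these semirings agree with $\T_{[1,2.5]}$, whereas from $z=3$ on there is no slack and $\phi$ is pinned down near the bottom; the boundary behaviour (at $z=3$, where $Y/2<2$, and at $z=4$, where the middle interval degenerates) needs attention. The rest is case-by-case verification of $\phi(a\otimes b)=\phi(a)\otimes\phi(b)$.
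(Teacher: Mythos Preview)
Your proof is correct and takes a genuinely different route from the paper in several places. You first uniformly scale every $\T_{[x,y]}$ with $x>0$ to $\T_{[1,y/x]}$ and then classify $\T_{[1,z]}$ by the value of $z$; the paper instead writes down a bespoke map directly from each $\T_{[x,y]}$ to its canonical target. Your treatment of $z\in(1,2]$ (the product is identically the top, so any order isomorphism works) is cleaner than the paper's explicit affine map. For the non-isomorphisms, you distinguish $\T_{[0,1]}$ by counting covering pairs in the order, whereas the paper uses the fact that it alone has elements of unbounded multiplicative order; both are valid invariants. The main methodological difference is in the rigidity step for $Y\ge 3$: the paper shows $\phi$ fixes $1+2^{-n}$ by an explicit recursion, then all dyadic rationals in $[1,2]$, then invokes density; you instead derive Jensen's equation on $[1,Y/2)$ (via $\phi(a+b)=\phi(a)+\phi(b)$ together with $\phi(2c)=2\phi(c)$), use monotonicity to force affineness, and then pin down $\beta=1$ from $\phi(2)=2$. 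Your approach is slightly more systematic and has the advantage that the single rigidity statement absorbs the case $\T_{[1,2.5]}\not\cong\T_{[1,3]}$, which the paper handles by a separate ad hoc contradiction. One small point of presentation: the phrase ``Jensen's equation on $[1,Y/2)$'' deserves the one-line justification that $\phi(2c)=2\phi(c)$ converts the additive relation into midpoint convexity, and the final step $Y'=\phi(Y)=Y$ should explicitly invoke continuity at the right endpoint, since the formula $\phi(v)=2+\beta(v-2)$ was only derived on $[2,Y)$.
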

\begin{proof}
If $x=0$, we define the map $\phi: \T_{[0,y]} \to \T_{[0,1]}$ by
\[ \phi(-\infty) = -\infty \text{ and } \phi(z) = \frac{z}{y} \text{ for } z \in [0,y].\]
As classical multiplication distributes over classical addition, and the fact that $y>0$ implies that $\phi$ is order preserving, it can be easily seen that $\phi$ is an isomorphism. 

\par If $x >0$ and $y \leq 2x$, we define the map $\phi: \T_{[x,y]} \to \T_{[1,2]}$ by
\[ \phi(-\infty) = -\infty, \ \phi(0)= 0, \text{ and } \phi(z) = \frac{z-x}{y-x} + 1 \text{ for } z \in [x,y].\]
Now, for $a,b \in [x,y]$, we have that
\[\phi(a) \otimes \phi(b) = \min\left(\frac{a-x}{y-x} + 1 + \frac{b-x}{y-x} + 1,2\right) = 2 = \phi(a \otimes b)\]
as $a,b \geq x$. Moreover, as $y-x>0$, $\phi$ is order preserving. Hence, it can be easily seen that $\phi$ is an isomorphism.

\par If $x > 0$ and $2x < y < 3x$, we define a piecewise linear map $\phi: \T_{[x,y]} \to \T_{[1,2.5]}$ by
\[ \phi(z) =
\begin{cases}
\frac{z-2x}{2(y-2x)} +2 &\text{if } 2x \leq z \leq y\\
\frac{z-(y-x)}{2(3x-y)} + 1.5 &\text{if } y-x < z < 2x \\
\frac{z-x}{2(y-2x)} +1 &\text{if } x \leq z \leq y-x \\
0 &\text{if } z = 0\\
-\infty &\text{if } z = -\infty
\end{cases}\]
Now, for $a \in [y-x,y]$ and $b \in [x,y]$, we have that
\[\phi(a) \otimes \phi(b) = 2.5 = \phi(y) = \phi(a \otimes b)\]
as $\phi(a) \geq 1.5$ and $\phi(b) \geq 1$. Finally, if $a,b \in [x,y-x]$ then
\begin{align*}
\phi(a) \otimes \phi(b) &= \min\left(\frac{a-x}{2(y-2x)} +1 + \frac{b-x}{2(y-2x)} +1,2.5\right)  \\
&= \min\left(\frac{(a+b)-2x}{2(y-2x)}+2,\frac{y-2x}{2(y-2x)} +2\right) \\
&= \frac{\min(a+b,y)-2x}{2(y-2x)}+2 \\
&= \phi(a \otimes b)
\end{align*}
as $a \otimes b \geq 2x$. Moreover, as $y-2x>0$ and $3x -y > 0$ this implies that $\phi$ is order preserving, and hence it can be easily seen that $\phi$ is an isomorphism. 
\par If $x > 0$ and $y>3$ then we define a map $\phi$ from $\T_{[x,y]}$ to $\T_{[1,\frac{y}{x}]}$ by
\[ \phi(-\infty) = -\infty, \ \phi(0)= 0, \text{ and } \phi(z) = \frac{z}{x} \text{ for } z \in [x,y].\]
As classical multiplication distributes over classical addition, and that $x >0$ implies that $\phi$ is order preserving, it can be easily seen that $\phi$ is an isomorphism.

For non-isomorphism we have to show that $\T_{[0,1]},\T_{[1,2]},\T_{[1,2.5]}$ and $\T_{[1,y]}$ for $y \geq 3$ are pairwise non-isomorphic. We can see that $\T_{[0,1]}$ is not isomorphic to any of the others, as it is the only one with unbounded multiplicative order. Similarly, $\T_{[1,y]}$ has no elements of multiplicative order 3 if and only if $y \leq 2$ so $\T_{[1,2]}$ is not isomorphic to the others. For $\T_{[1,2.5]}$, note that $\T_{[1,y]}$ has no elements of multiplicative order 4 if and only if $y \leq 3$, so $\T_{[1,2.5]}$ can not be isomorphic to any of the others apart from perhaps $\T_{[1,3]}$.
\par For a contradiction, suppose that $\T_{[1,2.5]}$ is isomorphic to $\T_{[1,3]}$ and let $\phi: \T_{[1,2.5]} \rightarrow \T_{[1,3]}$ be an isomorphism. 
As $\phi$ is order-preserving, we have that $\phi(1) = 1$ and $\phi(2.5) = 3$. Similarly, as $\phi$ preserves the semiring multiplication, we can conclude that 
\[\phi(2) = \phi(1) \otimes \phi(1) = 2 \text{ and } \phi(1.5) \cdot 1 = \phi(1.5) \otimes \phi(1) =\phi(2.5) = 3\]
and hence $\phi(1.5) \geq 2 = \phi(2)$
contradicting that $\phi$ is order-preserving. Hence, $\T_{[1,3]}$ and $\T_{[1,2.5]}$ are not isomorphic.

Finally, suppose $z > y \geq 3$ and suppose for a contradiction that there is an isomorphism $\phi: \T_{[1,y]} \rightarrow \T_{[1,z]}$. From the fact that
$\phi$ is a morphism and the definition of multiplication in the two semirings, we have $\phi(a+b) = \phi(a)+\phi(b)$ for all $a,b$ with $a+b \leq y$, and $\phi(1) = 1$. Hence, 
$\phi(2) = \phi(1+1) = \phi(1) + \phi(1) = 2$, and for $1 \leq x \leq y-1$,
\[\phi(x) = \phi(x + 1 - 1) = \phi(x + 1) - \phi(1)  = \phi\left(\frac{x+1}{2}\right) + \phi\left(\frac{x+1}{2}\right) -1\]
A simple inductive argument using this fact shows that $\phi(1 + 2^{-n}) = 1+ 2^{-n}$ for all $n \in \N \cup \lbrace 0 \rbrace$. Indeed, the base case is the fact that $\phi(2) = 2$, while if
the claim holds for some $n$ then taking $x = 1+2^{-n}$ we have $\frac{x+1}{2} = 1+2^{-(n+1)}$. Hence by the above $\phi(1+2^{-n} )= 2 \phi(1+2^{-(n+1)})-1$, so
$\phi(1+2^{-(n+1)}) = \frac{1}{2} (\phi(1+2^{-n})+1) = \frac{1}{2}(1+2^{-n} + 1) = 1+2^{-(n+1)}$ and the claim holds for $n+1$.

Note that for any $a,b$ with $a+b \leq 1$ if $\phi(1+a) = 1+a$ and $\phi(1+b) = 1+b$ then $\phi(1+a+b) = \phi(1+a) + \phi(1+b) - \phi(1) = 1+a+b$. By another
simple induction, we deduce that $\phi$ fixes all finite sums of negative powers of $2$ (in other words, all dyadic rationals) in the interval $[1,2]$. Since the
dyadic rationals are dense in the order, it follows that $\phi$ fixes everything in the interval $[1,2]$.

Finally, since $\phi$ preserves the multiplication in $\T_{[1,y]}$ and $y < z$, it preserves all finite sums which sum to $y$ or less. Since every element in
$[1,y]$ is a finite sum of values in $[1,2]$, it follows that $\phi$ is the identity function on $[1,y]$. Since it is surjective, this means that $y=z$.
\end{proof}

Next we observe that, as a consequence of our earlier results, there are examples of such semirings for which matrix semigroups are not permutable in any rank greater than $1$:
\begin{cor}\label{unboundedtruncated}
The semigroup $M_n(\T_{[0,1]})$ is permutable if and only if $n = 1$.
\end{cor}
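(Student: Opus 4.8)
The key observation is that $\T_{[0,1]}$ has elements of unbounded multiplicative order: for any $k \in \N$, the element $\frac{1}{k} \in [0,1]$ satisfies $\left(\tfrac{1}{k}\right)^j = \min(\tfrac{j}{k}, 1)$, so the distinct powers are $\tfrac{1}{k}, \tfrac{2}{k}, \dots, \tfrac{k}{k} = 1$, giving order exactly $k$. Since $\T_{[0,1]}$ is a commutative bipotent semiring (as noted in the excerpt, the truncated tropical semirings are examples of commutative bipotent semirings), this means it satisfies the hypothesis of Theorem~\ref{unbounded}. Applying that theorem immediately yields that $M_n(\T_{[0,1]})$ is not strongly permutable for $n \geq 2$. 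For the converse direction, $M_1(\T_{[0,1]})$ is isomorphic to the multiplicative semigroup of $\T_{[0,1]}$, which is commutative and hence strongly permutable. Combining these two facts gives the claim.

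So the proof is essentially a two-line deduction: verify the unbounded-order hypothesis by exhibiting the family $\{\tfrac{1}{k}\}_{k \in \N}$, invoke Theorem~\ref{unbounded} for $n \geq 2$, and observe commutativity handles $n = 1$. I would also note in passing (or it may already be implicit) that $\T_{[0,1]}$ has no element of \emph{infinite} multiplicative order — every element $z \in [0,1]$ with $z > 0$ reaches the cap $1$ after finitely many multiplications, and $0$, $-\infty$ are idempotent — which is precisely why one needs the model-theoretic strengthening in Theorem~\ref{unbounded} rather than the more elementary Lemma~\ref{inforder}. This is the sole conceptual point worth spelling out; everything else is bookkeeping.

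**Main obstacle.** There isn't a serious one — the entire content is the elementary computation that $\frac{1}{k}$ has order $k$ in $\T_{[0,1]}$, which is a one-line check. If anything, the only thing to be careful about is matching conventions: the multiplication in $\T_{[0,1]}$ is $a \otimes b = \min(a+b, 1)$, and one should confirm that the powers of $\frac{1}{k}$ are genuinely all distinct up to the $k$-th (they are, since $\tfrac{1}{k}, \tfrac{2}{k}, \dots$ are strictly increasing until they hit $1$), so that the order is genuinely unbounded as $k \to \infty$. With that in hand, Theorem~\ref{unbounded} does all the work, including the statement for $UT_n$ should one want it, though here only the full matrix semigroup is claimed.
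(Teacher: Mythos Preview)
Your proposal is correct and mirrors the paper's own proof essentially line for line: observe that $1/k$ has multiplicative order $k$ so that $\T_{[0,1]}$ has unbounded order, invoke Theorem~\ref{unbounded} for $n \geq 2$, and note that $M_1(\T_{[0,1]})$ is commutative. Your additional remark that $\T_{[0,1]}$ has no element of infinite order (so Lemma~\ref{inforder} alone would not suffice) is a nice clarifying observation that the paper omits.
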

\begin{proof}
The semigroup $M_1(\T_{[0,1]})$ is commutative and therefore strongly permutable. For $n > 1$, it is easy to
see that $\T_{[0,1]}$ has elements of unbounded multiplicative order (indeed, for any $j \in \mathbb{N}$ the element
$1/j$ has order $j$), so $M_n(\T_{[0,1]})$ is not strongly permutable by Theorem~\ref{unbounded}.
\end{proof}

By Theorem~\ref{trunciso}, we can now always take truncated tropical semirings to be either of the form $\T_{[0,1]}$ or $\T_{[1,z]}$. Corollary~\ref{unboundedtruncated} gives a full description of when the matrix semigroups $M_n(\T_{[0,1]})$ are permutable, so we now focus on matrix semigroups of form $M_n(\T_{[1,z]})$ for some $z > 1$.

\begin{thm} \label{12perm}
$M_n(\T_{[1,2]})$ is strongly permutable for all $n \in \N$.
\end{thm}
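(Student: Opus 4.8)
The plan is to exploit the very special structure of $\T_{[1,2]}$: its underlying set is $\{-\infty, 0\} \cup [1,2]$, and crucially the product (truncated addition) of \emph{any} two elements of $[1,2]$ equals $2$, since $1+1 = 2$ already hits the truncation bound. So the multiplicative monoid on $\{0\} \cup [1,2]$ is essentially a three-element structure: $0$ is a zero, every element of $[1,2]$ is (up to the top element $2$) ``multiplicatively large'', and any product of two of them is $2$. The key observation is therefore that for matrices, once you multiply enough of them together, the entries off the ``zero pattern'' saturate. I would first isolate this as a lemma about the semiring: for $a, b \in \T_{[1,2]}$, the product $a \otimes b \in \{-\infty, 0, 2\}$ unless one of $a,b$ is the identity $1$ — and more usefully, $a \otimes b = 2$ whenever $a, b \in [1,2]$.

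Next I would analyse products of matrices. Fix $n$ and take any $M_1, \dots, M_m \in M_n(\T_{[1,2]})$ with $m$ suitably large (I expect $m = 2$ or $m = 3$ to suffice, but I would not worry about optimality). The $(x,y)$ entry of $M_{\sigma(1)} \cdots M_{\sigma(m)}$ is the max over length-$m$ paths $x = x_0, x_1, \dots, x_m = y$ in $\Gamma_n$ of $\bigotimes_{j=1}^m (M_{\sigma(j)})_{x_{j-1}, x_j}$. I would argue that this value depends only on \emph{which} entries along the path are $-\infty$ (equivalently $0$, once we work in $\T_{[1,2]}$ rather than its adjoined-zero version — but here there is a genuine $-\infty$ to worry about since $M_n$ of a semiring without zero still has whatever $\T_{[1,2]}$ contains; in fact $\T_{[1,2]}$ already contains $-\infty$ and $0$), and in fact: if every entry along some path lies in $[1,2]$ and the path has length $\geq 2$, the product is $2$; if a path of length $\geq 2$ uses at least one entry equal to $1$ but no $-\infty$, one still gets $2$ provided at least two non-identity factors appear, which for $m$ large is automatic unless almost all factors contribute a $1$. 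The cleanest route is to reduce to the support/$\{-\infty, \text{not-}-\infty\}$ pattern: I would show that for $m \geq 3$, the entry $(M_{\sigma(1)} \cdots M_{\sigma(m)})_{x,y}$ depends only on the Boolean matrix of supports of $M_1, \dots, M_m$ (i.e.\ on which entries are $-\infty$) \emph{and not on $\sigma$ at all}, because any surviving path contributes $2$ regardless of order — here I must be careful about paths that pick up identity elements $0 \in \T_{[1,2]}$, but $0$ acts as $0$ (truncated $0+a = a$), wait, no: in $\T_{[1,2]}$ the element $0$ satisfies $0 \otimes a = \min(0+a, 2) = a$, so $0$ is the \emph{multiplicative identity}, not a zero; the zero is $-\infty$. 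So $0$ behaves like an identity and the honest statement is that a path's product is $2$ iff it contains at least two entries from $[1,2]$ and no $-\infty$; it is the unique $[1,2]$-entry if exactly one appears; it is $0$ if all are $0$; and $-\infty$ if any entry is $-\infty$.

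So the real content is: for $m \geq 3$ and any $\sigma$, the product $M_{\sigma(1)} \cdots M_{\sigma(m)}$ has $(x,y)$ entry equal to $2$ if there is a $\sigma$-independent path from $x$ to $y$ avoiding $-\infty$-entries and using $[1,2]$-entries at at least two positions; this last condition, ``at at least two positions'', is where order could in principle matter, and handling it is the main obstacle. I would resolve it by choosing $m$ large enough (say $m = 3$, or if needed larger) and arguing by cases on how many of the $M_i$ have all entries along the relevant path equal to the identity $0$: if two or more of the factors contribute a genuine $[1,2]$ entry somewhere, then by rerouting (the replacement-of-intermediate-vertex trick from the proof of Proposition~\ref{dimensionchange}) one gets a path with two $[1,2]$-entries and the entry is $2$ independent of $\sigma$; the remaining degenerate cases, where all but at most one factor acts as the identity on that coordinate, are handled directly since then the product is essentially a single matrix or a reordering of two, both of which are order-insensitive for a max-plus-type computation with a commutative underlying addition. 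Once every entry of $M_{\sigma(1)} \cdots M_{\sigma(m)}$ is shown to be independent of $\sigma$, strong permutability with constant $k = m$ follows immediately, and hence by Proposition~\ref{dimensionchange} (or directly) $M_n(\T_{[1,2]})$ is strongly permutable for all $n$.
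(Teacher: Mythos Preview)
Your approach has a genuine gap: the target you aim for --- that for $m$ large enough the product $M_{\sigma(1)}\cdots M_{\sigma(m)}$ is \emph{independent of $\sigma$} --- is simply false, for every $m$. Take $n=2$ and
\[
M_1=\begin{pmatrix}-\infty&0\\0&-\infty\end{pmatrix},\quad
M_2=\begin{pmatrix}1&-\infty\\-\infty&1.5\end{pmatrix},\quad
M_3=\cdots=M_m=\begin{pmatrix}0&-\infty\\-\infty&0\end{pmatrix}.
\]
Then $M_1M_2M_3\cdots M_m=M_1M_2=\left(\begin{smallmatrix}-\infty&1.5\\1&-\infty\end{smallmatrix}\right)$ while $M_2M_1M_3\cdots M_m=M_2M_1=\left(\begin{smallmatrix}-\infty&1\\1.5&-\infty\end{smallmatrix}\right)$. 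Indeed already for $m=3$ one can arrange that \emph{all} $m!$ products are distinct (replace $M_3$ above by $\left(\begin{smallmatrix}0&-\infty\\0&-\infty\end{smallmatrix}\right)$ and check). So you cannot hope to show that every entry is $\sigma$-independent; strong permutability only asks that \emph{some} non-identity $\sigma$ works. Your handling of the ``degenerate cases'' is where the error concentrates: you assert that when all but one or two factors act trivially the product is ``order-insensitive'', but matrix multiplication over a commutative semiring is still non-commutative, and the example above is exactly this degenerate case. The rerouting idea from Proposition~\ref{dimensionchange} does not help either, because which paths avoid $-\infty$ depends on $\sigma$ (the $j$th edge of a path is read from $M_{\sigma(j)}$, not from $M_j$).

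The paper's proof is quite different and does not attempt any direct path analysis. It invokes Theorem~\ref{kerperm}: one shows that for any finite set $X\subseteq\T_{[1,2]}$ there is a congruence $\equiv$ with at most $2|X|+3$ classes separating the elements of $X$ (the classes are the points of $X\cup\{0,-\infty\}$ together with the open order-intervals between them; the crucial multiplicative check is exactly your observation that $a\otimes b=2$ for all $a,b\in[1,2]$). Given a product $A_1\cdots A_m$ with $m>(2n^2+3)^{n^2}$, one applies the quotient map determined by the $n^2$ entries of that product; pigeonhole then gives $i\neq j$ with $A_i\equiv A_j$ entrywise, and the transposition $(i\,j)$ preserves the product. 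So the argument produces a specific transposition rather than $\sigma$-independence.
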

\begin{proof}
We shall show that $\T_{[1,2]}$ satisfies the hypothesis of Theorem~\ref{kerperm}.
Let $X = \lbrace x_1, \dots, x_k \rbrace$ be a finite subset of $\T_{[1,2]}$ and $X' = X \cup \{0,-\infty\}$. 
Define a binary relation $\equiv$ on $\T_{[1,2]}$ by $a \equiv b$ if and only if $a$ and $b$ either (i) are equal or (ii) are not in $X'$ and lie above exactly the same elements of $X'$. It is easy to see that $\equiv$ is an equivalence relation with at most $2|X| + 3$ classes, in which each element of $X$ occupies its own equivalence class.

\par We must now show that $\equiv$ is a congruence. As $\T_{[1,2]}$ is commutative, we only have to show that $\equiv$ is a left congruence.
Let $x \equiv y$. Clearly, if $a = 0$ or $-\infty$, we have that $a \otimes x \equiv a \otimes y$ and $a \oplus x \equiv a \oplus y$. Moreover, if $x = y$, we have that
$a \otimes x \equiv a \otimes y$ and $a \oplus x \equiv a \oplus y$. Hence, as $0,-\infty \in X'$, we can assume that $a,x,y \geq 1$, and thus $a \otimes x = 2  = a \otimes y$.

Further, if $a \geq x,y$ or $a \leq x,y$, then clearly $a \oplus  x \equiv a \oplus y$. On the other hand, if $a$ lies between $x$ and $y$ in the order then $a$, $x$, $y$, $a \oplus x$ and $a \oplus y$ all lie above exactly the same elements of $X'$, giving that $a \oplus x \equiv a \oplus y$. Thus we conclude that $\equiv$ is a congruence.
\par Hence, by the usual first isomorphism theorem, the natural morphism
$\T_{[1,2]} \to \T_{[1,2]} / \equiv$ satisfies the conditions of Theorem~\ref{kerperm}, and $M_n(\T_{[1,2]})$ is strongly permutable for all $n \in \N$.
\end{proof}

The rest of this section treats the remaining truncated tropical semirings, that is, those of the form $\mathbb{T}_{[1,z]}$ with $z > 2$. These will give examples of semirings $S$ such that $M_2(S)$ is strongly permutable, but $M_n(S)$ is not strongly permutable for all $n > 2$. We use the notation $\ceil{z}$ to denote the smallest integer greater than or equal to $z \in \R$.
We shall say that a semigroup $S$ is $k$-permutable if for every $s_1,\dots,s_k \in S$ there exists a non-trivial permutation $\sigma \in \cS_k$ such that $s_{\sigma(1)}s_{\sigma(2)}\cdots s_{\sigma(k)} = s_1s_2\cdots s_k$.

\begin{lem} \label{xperm}
For $z > 2$, let $S$ and $S'$ be subsemigroups of $M_2(\T_{[1,z]})$ given by
\[S = 
\left\{
\begin{pmatrix}
0 &  a \\
-\infty & b 
\end{pmatrix}
: a,b \in \T_{[1,z]}
\right\}
\text { and }
S' = 
\left\{
\begin{pmatrix}
0 &  -\infty \\
a & b 
\end{pmatrix}
: a,b \in \T_{[1,z]}
\right\}.
\]
Then $S$ and $S'$ are both $(2\ceil{z}+5)$-permutable.
\end{lem}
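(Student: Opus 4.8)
The plan is to use the very special shape of products in $S$. Write a typical element of $S$ as $\begin{pmatrix}0 & a\\ -\infty & b\end{pmatrix}$; using the path description of matrix multiplication over a bipotent semiring, for $A_1,\dots,A_\ell\in S$ (with $A_i$ carrying $a_i$ above the diagonal and $b_i$ on it) the product $A_1\cdots A_\ell$ again has first column $(0,-\infty)^{\mathsf T}$, its $(2,2)$ entry is $b_1\otimes\cdots\otimes b_\ell$, and its $(1,2)$ entry is $\bigoplus_{j=1}^{\ell}\bigl(a_j\otimes b_{j+1}\otimes\cdots\otimes b_\ell\bigr)$. The $(2,2)$ entry is symmetric in the factors, so the whole task is to control the $(1,2)$ entry, call it $P$. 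Fix $k=2\ceil z+5$ and a list $A_1,\dots,A_k\in S$. I first dispose of three easy situations: (i) if some matrix occurs twice, transposing two such occurrences leaves the list unchanged; (ii) two \emph{adjacent} factors of the form $\begin{pmatrix}0&a\\ -\infty&0\end{pmatrix}$ may be swapped, since such matrices commute; (iii) two adjacent diagonal factors $\begin{pmatrix}0&-\infty\\ -\infty&b\end{pmatrix}$ may be swapped, since diagonal matrices commute. So assume none of (i)--(iii) occurs; the force of (ii)--(iii) is the pigeonhole bound that a no-two-consecutive subset of $k$ (or fewer) consecutive positions has size at most $\ceil z+3$, so any count of ``$b_i=0$'' positions or of diagonal positions inside an interval that exceeds this bound finishes the proof.

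The main split is on whether some $b_i=-\infty$. Suppose first that no $b_i=-\infty$. I claim the reductions force $P=z$: if $a_1\ne-\infty$ then $P<z$ forces the $j=1$ term $a_1\otimes b_2\otimes\cdots\otimes b_k$ to be $<z$, hence $\sum_{i\ge 2}b_i<z$, hence at most $\ceil z-1$ of the indices $i\ge 2$ have $b_i\in[1,z]$ and so at least $\ceil z+5$ of them have $b_i=0$ — too many to avoid an adjacent pair; a parallel count with the $j=2$ term handles the case $a_1=-\infty\ne a_2$; and $a_1=a_2=-\infty$ is an adjacent diagonal pair. With $P=z$ established, some term $a_{j_0}\otimes b_{j_0+1}\otimes\cdots\otimes b_k$ already reaches $z$; since deleting factors preceding $A_{j_0}$ cannot decrease the $b$-mass after it, the non-trivial reordering that moves $A_{j_0}$ to the front (or, when $j_0=1$, keeps $A_1$ first and permutes the remaining $k-1$ factors) has its position-$1$ term equal to $z$, so $P$ is unchanged.

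Now suppose some $b_i=-\infty$, and let $m$ be the largest such index. If $m\ge 3$, permuting the first $m-1$ factors among themselves is non-trivial, keeps $m$ the last $-\infty$-position, and leaves $P$ unchanged because every term of the formula indexed by $j<m$ already vanishes (it contains $b_m=-\infty$); so assume $m\le 2$. Then one checks $A_1\cdots A_m=\begin{pmatrix}0&a_m\\ -\infty&-\infty\end{pmatrix}$, so $P=p\oplus(a_m\otimes q)$, where $(p,q)$ are the $(1,2),(2,2)$ entries of the tail $T=A_{m+1}\cdots A_k$, which has at least $2\ceil z+3$ factors, all with finite $b$. Repeating the pigeonhole counts on $T$ (first for $q$, then, assuming $q=z$, for $p$ via its $j=m+1$ or $j=m+2$ term) shows that under the reductions $q=z$ and $p=z$, whence $P=z$. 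To reproduce $P=z$ non-trivially I keep $A_1,\dots,A_m$ fixed and rearrange $T$: since $p=z$, some $A_l$ with $m<l\le k$ satisfies $a_l+(\text{sum of the }b\text{'s after it in }T)\ge z$; moving $A_l$ to the front of $T$ (or permuting the rest of the tail if $l=m+1$) makes the leading term of the new tail equal $z$ while still $q=z$, so $P$ stays $z$. Finally, $S'$ is anti-isomorphic to $S$ via transposition, and $k$-permutability passes to the opposite semigroup, so the statement for $S'$ follows.

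I expect the case $b_i=-\infty$ to be the main obstacle: keeping track of the tail product $T$, establishing $q=z$ and then $p=z$ from the reductions, and verifying that the chosen rearrangement of $T$ really does leave the $(1,2)$ entry at $z$, is exactly where the numerology in the constant $2\ceil z+5$ is consumed — each pigeonhole step is tight to within one or two, so the bookkeeping must be done with some care (though I do not expect the resulting bound to be optimal).
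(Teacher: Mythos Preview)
Your argument is correct, and it lands on the same opening reductions as the paper (adjacent diagonal pairs commute; adjacent unitriangular pairs commute), but the core is organised differently.

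The paper, after those reductions, observes that any $X_t$ with $(X_t)_{2,2}=-\infty$ is a \emph{right zero} of $S$, so whenever such a $t>2$ exists one may simply swap $X_1$ and $X_2$. With that disposed of, a single pigeonhole (at least one $(2,2)$-entry $\geq 1$ in every adjacent pair past the first three) forces the partial product $X_1\cdots X_{m-2}$ to have first row $(0,z)$ and $(2,2)$-entry $z$ or $-\infty$; this makes it a \emph{left zero} for matrices with finite $(2,2)$-entry, so swapping the last two factors finishes. The whole proof is two structural observations (right zero, left zero) plus one count.

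You instead compute the $(1,2)$-entry $P$ explicitly, split on the location of the last $b_i=-\infty$, and in each branch construct a bespoke permutation (moving a witness $A_{j_0}$ or $A_l$ to the front) that keeps $P=z$. This is sound---the pigeonhole counts you outline do go through with the stated constant, and the permutations you describe do preserve the product---but it is more case-laden and requires separate verification of $q=z$, $p=z$, and the preservation claims. What you gain is a more explicit accounting of exactly where the budget $2\ceil{z}+5$ is spent; what the paper gains is brevity, by packaging the same saturation phenomenon as ``left zero'' rather than tracking $P$ directly. (Your reduction (i), incidentally, is never used and can be dropped.)
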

\begin{proof}
Transposing matrices is a semigroup anti-isomorphism between $S$ and $S'$, so it suffices to prove that $S$ is $(2\ceil{z}+5)$-permutable.
\par Let $m = 2\ceil{z} +5$ and let $X_1, \dots, X_m \in S$. If $(X_t)_{2,2}= -\infty$ for any $t>2$ then, as $X_t$ is a right zero of $S$, we have that $X_1X_2\cdots X_m = X_2X_1\cdots X_m$. Thus we may assume $(X_t)_{2,2} \neq -\infty$ for all $t > 2$.
\par If $(X_t)_{1,2},(X_{t+1})_{1,2} = -\infty$ for some $t < m$ then as diagonal matrices commute, we have $X_1\cdots X_tX_{t+1} \cdots X_m = X_1\cdots X_{t+1}X_{t} \cdots X_m$. Therefore, we may assume either $(X_2)_{1,2} \neq -\infty$ or $(X_3)_{1,2} \neq -\infty$. Combined with
the assumption from the previous paragraph, this implies we may assume that $(X_1\cdots X_m)_{1,2} \neq -\infty$.
\par If $(X_t)_{2,2},(X_{t+1})_{2,2} = 0$ for some $t < m$ then, because $2 \times 2$ unitriangular matrices commute, we have $X_1\cdots X_tX_{t+1} \cdots X_m = X_1\cdots X_{t+1}X_{t} \cdots X_m$.
Hence, we may assume that among every pair of every two consecutive matrices (except perhaps the first three) there is a matrix $X_t$ with $(X_t)_{2,2} \geq 1$.
Since $m = 2\ceil{z}+5$ this means we have $(X_1 \cdots X_{m-2})_{1,2} = z$ and $(X_1 \cdots X_{m-2})_{2,2} = z$ or $-\infty$. In both of these cases $X_1 \cdots X_{m-2}$ acts as a left zero for all matrices $M$ with $M_{2,2} \neq -\infty$. But we assumed $(X_t)_{2,2} \neq -\infty$
for $t > 2$, so we have
$$X_1 \cdots X_m = X_1 \cdots X_{m-2} X_{m-1} X_m = X_1 \cdots X_{m-2} X_m X_{m-1}.$$
Thus $S$, and hence also $S'$, is $(2\ceil{z}+5)$-permutable.
\end{proof}

\begin{lem} \label{mideal}
Let $A_0 \in M_2(\T_{[1,z]})$ and $m$ be the minimum finite entry of $A_0$ (or $m = z$ if $A_0$ if all entries are $-\infty$). Let $k \geq 17(16\ceil{z} + 45)$. Then for all $A_1, \dots, A_k \in M_2(\T_{[1,z]})$, either 
\[ (A_0A_1 \cdots A_k)_{i,j} \neq m \text { for all } i,j\]
or there exists a non-trivial $\sigma \in \cS_k$ such that
\[ A_0A_1A_2\cdots A_k = A_0A_{\sigma(1)}A_{\sigma(2)}\cdots A_{\sigma(k)} \]
\end{lem}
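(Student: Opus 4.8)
The plan is to argue by cases on the minimum finite entry $m$ of $A_0$. The case $m=z$ (every finite entry of $A_0$ equals $z$) is easy and needs only $k\geq 17$: since $z\otimes x=z$ for every finite $x\in\T_{[1,z]}$ while $z\otimes(-\infty)=-\infty$, an induction on $t$ shows each prefix $A_0A_1\cdots A_t$ has all entries in $\{-\infty,z\}$ and is determined by the $\{-\infty,z\}$-pattern of $A_0$ together with the support patterns (which entries are $-\infty$) of $A_1,\dots,A_t$. A $2\times 2$ matrix has only $16$ support patterns, so two of $A_1,\dots,A_{17}$ share one, and the transposition interchanging them fixes $A_0A_1\cdots A_k$; if $A_0A_1\cdots A_k$ is the all-$(-\infty)$ matrix the first alternative holds.

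Now assume $m<z$ and that $(A_0A_1\cdots A_k)_{i,j}=m$ for some $i,j$, the first alternative holding otherwise. Expanding this entry by bipotency as a maximum, over walks $i=i_0,i_1,\dots,i_{k+1}=j$ in the complete digraph on $\{1,2\}$, of $\min\!\bigl(\sum_{t=0}^{k}(A_t)_{i_t,i_{t+1}},z\bigr)$, and using that $m$ is the least finite entry of $A_0$ while all finite entries are $\geq 0$, the sum along any finite walk is $\geq m$; hence the maximum equals $m<z$ only if \emph{every} finite $i$--$j$ walk has sum exactly $m$, forcing $(A_0)_{i_0,i_1}=m$ and $(A_t)_{i_t,i_{t+1}}=0$ for all $t\geq 1$. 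Introduce the forward-reachable sets $R_t\subseteq\{1,2\}$, the backward-coreachable sets $L_t$, and the \emph{corridor} $C_t=R_t\cap L_t$ of indices occurring at position $t$ in some finite $i$--$j$ walk; all $C_t$ are nonempty, and we obtain $(A_t)_{p,q}\in\{-\infty,0\}$ for every $t\geq 1$, $p\in C_t$, $q\in C_{t+1}$, with the zero-entry bipartite graph on $C_t\sqcup C_{t+1}$ having no isolated vertex.

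One then runs a finite case analysis on the evolution of $(R_t,L_t)$. When $C_t=C_{t+1}=\{1,2\}$, $A_t$ lies in the finite set of $\{-\infty,0\}$-matrices; when the corridor is a singleton at $t$ and $t+1$, inspecting $R_t$ and $L_t$ separately forces a prescribed off-diagonal entry of $A_t$ to be $-\infty$ and the matching diagonal entry to be $0$, so $A_t$ lies in one of the four subsemigroups got from $S$ and $S'$ of Lemma~\ref{xperm} by transposition and the index-swap isomorphism — each therefore $(2\ceil{z}+5)$-permutable. The remaining ``transition'' steps are what the bookkeeping must control: one checks each such $A_t$ is either a member of a fixed finite set of distinguished matrices (such as $\left(\begin{smallmatrix}0&-\infty\\-\infty&-\infty\end{smallmatrix}\right)$) or is the unique ``escape'' of a maximal run of a fixed state, every such run being entered via one of those distinguished matrices. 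Thus, assuming no matrix from the distinguished finite set occurs twice among $A_1,\dots,A_k$ (otherwise transpose two equal ones and finish), all but boundedly many $A_t$ lie in $\bigcup_i\Sigma_i$; splitting $\{1,\dots,k\}$ into $17$ consecutive blocks of length $16\ceil{z}+45=8(2\ceil{z}+5)+5$ — possible exactly because $k\geq 17(16\ceil{z}+45)$ — then yields a block containing $2\ceil{z}+5$ consecutive factors inside a single $\Sigma_i$, and Lemma~\ref{xperm} supplies a non-trivial permutation of those factors preserving their product, hence preserving $A_0A_1\cdots A_k$.

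The main obstacle is exactly the combinatorial accounting of the previous paragraph: pinning down which $(R_t,L_t)$-transitions force $A_t$ into the finite distinguished set versus the four ``triangular-with-a-diagonal-zero'' subsemigroups, proving the ``escape'' matrices are confined to — and separated by the openings of — runs of bounded multiplicity, and checking $16\ceil{z}+45$ is large enough that one of the $17$ blocks must be homogeneous. The conceptual core — that an entry equal to $m$ compels $A_1,\dots,A_k$ to act by zeros all along the corridor of surviving walks, which is precisely what makes Lemma~\ref{xperm} applicable — is robust; everything else is careful bookkeeping.
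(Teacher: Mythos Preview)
Your corridor observation is correct and elegant: if $(A_0A_1\cdots A_k)_{i,j}=m<z$ then every finite $i$--$j$ walk must use only the entry $m$ in $A_0$ and only entries equal to $0$ in each subsequent $A_t$, so $(A_t)_{p,q}\in\{-\infty,0\}$ for all $p\in C_t$, $q\in C_{t+1}$. This is not how the paper proceeds, so if the argument could be completed it would be a genuinely different proof.

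Unfortunately the step after the corridor observation contains a concrete error. You assert that when $C_t$ and $C_{t+1}$ are both singletons, inspection of $R_t$ and $L_t$ forces some off-diagonal entry of $A_t$ to be $-\infty$, placing $A_t$ in one of the four $S$-type subsemigroups. This is false. Take $C_t=C_{t+1}=\{1\}$ arising from $(R_t,L_t)=(\{1\},\{1,2\})$ and $(R_{t+1},L_{t+1})=(\{1,2\},\{1\})$. Then $R_t=\{1\}\to R_{t+1}=\{1,2\}$ requires $(A_t)_{1,1}$ and $(A_t)_{1,2}$ both finite, while $L_t=\{1,2\}$ with $L_{t+1}=\{1\}$ requires $(A_t)_{2,1}$ finite. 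The corridor only pins $(A_t)_{1,1}=0$; the entries $(A_t)_{1,2}$, $(A_t)_{2,1}$, $(A_t)_{2,2}$ may be arbitrary elements of $[1,z]$, so $A_t$ need not lie in any of your four subsemigroups nor in any finite set. Such ``bad'' steps are thus not confined to corridor-size transitions, and they can occur every time the pair $(R_t,L_t)$ changes; since that pair is not monotone, you must genuinely bound the number of state changes, and the proposal gives no mechanism for this beyond an appeal to unspecified bookkeeping. (You are right that the reverse transition $(\{1,2\},\{1\})\to(\{1\},\{1,2\})$ forces the single matrix $\left(\begin{smallmatrix}0&-\infty\\-\infty&-\infty\end{smallmatrix}\right)$, which gives some control, but a full analysis across all seven admissible $(R_t,L_t)$ states and their transitions is needed, and you have not shown that the constants $17$ and $16\ceil{z}+45$ suffice.)

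The paper circumvents this difficulty by a different reduction. Rather than constraining individual $A_t$'s via the corridor, it uses only that every \emph{prefix} $A_0A_1\cdots A_p$ contains an entry equal to $m$, pigeonholes to find $\lceil k/4\rceil$ prefixes with $m$ in the same position (say $(1,1)$), and then groups the $A_t$'s into block products $B_j$ between consecutive such prefixes. The constraint $(\Sigma_{j-1})_{1,1}=(\Sigma_j)_{1,1}=m$ restricts the \emph{block} $B_j$ (not the individual $A_t$'s inside it): once $(\Sigma_t)_{1,2}>m$ this forces $(B_{t+1})_{2,1}=-\infty$ and $(B_{t+1})_{1,1}=0$, i.e.\ $B_{t+1}\in S$, and the paper shows this regime is reached and persists for $2\ceil{z}+5$ consecutive blocks, at which point Lemma~\ref{xperm} applies to the blocks. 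Working with blocks rather than individual matrices is exactly what absorbs the oscillation that derails your direct approach.
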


\begin{proof}
Consider a product $A_0 A_1 \dots A_k$.
If the product does not contain an $m$ we are done.
Moreover, as $M_2(\T_{[x,z]}\setminus \{0\})$
is an ideal of $M_2(\T_{[1,z]})$ for all $x \in [1,z]$, we may suppose every truncated product $A_0 A_1 \dots A_p$ with $p < k$ has at least one entry equal
to $m$.
\par By the pigeon hole principle there exists a sequence of indices $0 \leq i_0  < \dots < i_n \leq k$ where $n = \ceil{\frac{k}{4}} -1 $ such that each product matrix $A_0A_1\cdots A_{i_j}$ has $m$ in the same position. If this is the (1,2) or the (2,1) position then note that swapping the rows of $A_0$ swaps the rows of the product $A_0A_1 \cdots A_t$ for all $t \leq k$. Therefore if $\sigma$ is a permutation that does not change the product, then $\sigma$ will also preserve the product obtained by swapping $A_0$'s rows. 
Hence, we can assume that the $m$'s are in the (1,1) or (2,2) position. Moreover, by relabelling the rows and columns if necessary, we can assume without loss of generality that $A_0A_1\cdots A_{i_j}$ has $m$ in the (1,1) position for all $0 \leq j \leq n$. 

\par Now consider the matrices defined by
\[ B = A_0\cdots A_{i_0} \text{ and } B_j = A_{i_{j-1}+1}\cdots A_{i_j} \]
for $1 \leq j \leq n$.
Any permutation of this sequence which does not change the product clearly yields a permutation of the original sequence which does not change the product, so it is enough to seek a non-trivial permutation of this sequence.
We define the truncated products $\Sigma_t := BB_1 \cdots B_t$
for $0 \leq t \leq n$.
\par First we consider any matrices $B_i$ whose entries are all either $0$ or $-\infty$. There are only $16$ distinct matrices of this form,
so if more than $16$ of the $B_i$s have this form then the same matrix would appear twice in the sequence, resulting in a non-trivial permutation that preserves the product. Otherwise, since $n = \ceil{\frac{k}{4}}-1 > 17(4\ceil{z}+11)$ the $B_i$s contain a subsequence of $4 \ceil{z}+11$ consecutive matrices not of this form, say $B_p, \dots, B_q$ where $q-p = 4 \ceil{z}+10$.

\par We now define five subsets of $M_2(\T_{[1,z]})$:
\begin{align*}
S &= \left\{
\begin{pmatrix}
0 &  a \\
-\infty & b 
\end{pmatrix} : a,b \in \T_{[1,z]} \right\},
&&S'= \left\{
\begin{pmatrix}
0 & -\infty \\
a & b
\end{pmatrix} : a,b \in \T_{[1,z]} \right\}, \\
T &= \left\{
\begin{pmatrix}
0 & a \\
0 & b 
\end{pmatrix} : a,b \in \T_{[1,z]}\right\},
&&U = \left\{
\begin{pmatrix}
-\infty & a \\
0 & b
\end{pmatrix} : a,b \in \T_{[1,z]} \right\}, \\
V &= \left\{
\begin{pmatrix}
0 & c \\
a & b
\end{pmatrix} : a,b,c \in \T_{[1,z]}\right\}.
\end{align*}
We shall show that the sequence of $B_i$s contains $2\ceil{z}+5$ consecutive matrices either all in $S$ or all in $S'$. From this it will follow by Lemma~\ref{xperm} that there is a permutation of the sequence which preserves the product, as required.

\par Note that $S,S',T \subseteq V$. For $p \leq t \leq q-1$, we have that $(\Sigma_t)_{1,1} = (\Sigma_{t+1})_{1,1} = m$. So, if $(\Sigma_t)_{1,2} = -\infty$, then
in order to ensure $(\Sigma_t B_{t+1})_{1,1} = (\Sigma_{t+1})_{1,1} = m$ we must have $(B_{t+1})_{1,1} = 0$, that is, $B_{t+1} \in V$. Similarly, if $(\Sigma_t)_{1,2} = m$, then $B_{t+1} \in S, T$ or $U$. Otherwise, $(\Sigma_t)_{1,2} > m$ and we have that $B_{t+1} \in S$.

If the matrices $B_p, \dots, B_{p+2\ceil{z}+4}$ are all in $S'$ then we are done. Otherwise, choose $t$ with $p \leq t \leq p+2\ceil{z}+4$ such that
$B_t \notin S'$. Since $(\Sigma_{t-1})_{1,1} = m$ and $\Sigma_t = \Sigma_{t-1} B_t$, this means that $(\Sigma_{t})_{1,2} \neq -\infty$.

Now because $(\Sigma_t)_{11},(\Sigma_t)_{1,2} \geq m$ and $B_{t+1}$ lies in $S$, $T$ or $U$ with (because of the assumption that the entries of $B_{t+1}$
are not all $0$ and $-\infty$)  
either $(B_{t+1})_{1,2} \geq 1$ or $(B_{t+1})_{2,2} \geq 1$, we have that $(\Sigma_{t+1})_{1,2} > m$ and of course by definition we have $(\Sigma_{t+1})_{1,1} \geq m$. Continuing by induction we deduce that $(\Sigma_{i})_{1,2} > m$ for all $i$ with $t+1 \leq i \leq q$. By the remarks in the last paragraph but one, this means
that $B_j \in S$ for all $t+2 \leq j \leq q$, which means the matrices $B_{t+2}, \dots B_{t+1+2\ceil{z}+5}$ are all in $S$, as required.
\end{proof}
\begin{thm} \label{truncperm}
Let $z > 2$. Then $M_2(\T_{[1,z]})$ is strongly permutable.
\end{thm}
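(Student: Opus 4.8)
The plan is to show that $M_2(\T_{[1,z]})$ is $k$-permutable for some $k$ depending only on $z$ (which gives strong permutability). The strategy is: given matrices $B_1,\dots,B_k$, locate inside the product $B_1\cdots B_k$ a long run of consecutive factors across which a suitable monotone statistic is constant, and apply Lemma~\ref{mideal} to that run. Several degenerate situations are disposed of first: if two of the $B_i$ coincide, the transposition of their positions preserves the product; and we may also assume no truncated product $B_1\cdots B_t$ equals the zero matrix (in particular no $B_i$ is the zero matrix), this case being easily handled on its own. In particular each of the at most $16$ matrices over $\{-\infty,0\}$ occurs at most once.

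For $1\le t\le k$ let $\mu_t$ be the minimum finite entry of $B_1\cdots B_t$. Since $M_2(\{-\infty\}\cup[x,z])$ is an ideal of $M_2(\T_{[1,z]})$ for every $x\in[1,z]$ --- the observation already used in the proof of Lemma~\ref{mideal} --- and since a truncated product with no entry $0$ cannot acquire one later, the sequence $(\mu_t)$ is non-decreasing, takes values in $\{0\}\cup[1,z]$, is eventually equal to $z$, and hence attains each of its values on an interval of consecutive indices.

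The crucial point, and the one I expect to require the most care, is to bound the number of distinct values of $(\mu_t)$ by a quantity $N(z)$ depending only on $z$. Here one separates the factors $B_i$ all of whose entries lie in $[1,z]$ (the \emph{full} factors) from the rest: multiplying a matrix with minimum finite entry $m<z$ on the right by a full factor makes every finite entry of the product at least $m+1$, so full factors account for at most $\lceil z\rceil$ of the increases of $\mu$. For an increase caused by a non-full factor $B$ one analyses precisely when $P\mapsto PB$ can raise the minimum finite entry by strictly less than $1$: this forces $B$ to have an all-zero column and forces the entry realising the new minimum to be a row-maximum of $P$, and, using that only $16$ matrices have all entries in $\{-\infty,0\}$ and that no matrix repeats, one bounds the number of such ``fractional'' increases. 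Altogether this gives the desired bound $N(z)$.

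Granting this, put $K=17(16\lceil z\rceil+45)$, the constant of Lemma~\ref{mideal}, and take $k=N(z)\,(K+1)$. Given $B_1,\dots,B_k$, monotonicity of $(\mu_t)$ and the bound on its number of values force some value $m$ to be attained on $K+1$ consecutive indices $s,s+1,\dots,s+K$, with $s\ge 1$. Apply Lemma~\ref{mideal} with $A_0=B_1\cdots B_s$ (whose minimum finite entry is $m$) and $A_i=B_{s+i}$ for $1\le i\le K$: the matrix $A_0A_1\cdots A_K=B_1\cdots B_{s+K}$ has minimum finite entry $m$, hence has an entry equal to $m$ (here we use that this product is not the zero matrix), so the first alternative of Lemma~\ref{mideal} fails and there is a non-trivial $\sigma\in\cS_K$ with $A_0A_1\cdots A_K=A_0A_{\sigma(1)}\cdots A_{\sigma(K)}$. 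Substituting this equality into $B_1\cdots B_k=(A_0A_1\cdots A_K)\,B_{s+K+1}\cdots B_k$ gives $B_1\cdots B_k=B_{\sigma'(1)}\cdots B_{\sigma'(k)}$ for the permutation $\sigma'\in\cS_k$ that is the identity off $\{s+1,\dots,s+K\}$ and equals $i\mapsto s+\sigma(i-s)$ on it, which is non-trivial. Hence $M_2(\T_{[1,z]})$ is $k$-permutable, as required.
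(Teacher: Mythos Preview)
Your overall architecture is exactly the paper's: define the non-decreasing sequence $\mu_t$ of minimum finite entries of the truncated products, bound the number of distinct values it takes by some $N(z)$, and then apply Lemma~\ref{mideal} to a run of length $K=17(16\lceil z\rceil+45)$ on which $\mu_t$ is constant. The application of Lemma~\ref{mideal} at the end is correct. (One small slip: it is not true that $(\mu_t)$ is ``eventually equal to $z$'' --- take all $B_i$ upper unitriangular with distinct $(1,2)$-entries --- but this is harmless since all you need is the bound on the number of values.)

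The genuine gap is in your bound on the ``fractional'' increases. Your claim that a fractional increase forces the factor $B$ to have all entries in $\{-\infty,0\}$ (so that the $16$-matrix / no-repeat argument applies) is false. What the analysis actually gives is only that \emph{one column} of $B$ has entries in $\{-\infty,0\}$ at the positions where the relevant row of $P$ is finite; the other column of $B$ is unconstrained, so there are infinitely many such $B$ and the no-repeat reduction buys nothing. Likewise, the new minimum is an \emph{entry} of $P$, but not necessarily a row-maximum. So as written the bound on $N(z)$ is not established.

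The paper closes this gap with a much cleaner observation that is essentially the correct version of your ``row-maximum'' remark. Writing $k_1<k_2<\dots$ for the jump times of $\mu$, for any fixed $j$ one has $\Sigma_{k_{j+\ell}}=\Sigma_{k_j}C$ for some $C\in M_2(\T_{[1,z]})$, and every finite entry of $\Sigma_{k_{j+\ell}}$ is therefore either an entry of the $2\times 2$ matrix $\Sigma_{k_j}$ (when the matching entry of $C$ is $0$) or at least $m_{k_j}+1$ (when it is $\geq 1$). Hence among any five consecutive distinct values $m_{k_j}<\dots<m_{k_{j+4}}$ at least one must exceed $m_{k_j}$ by $1$ or more, giving $m_{k_{j+4}}\geq m_{k_j}+1$ and thus $N(z)\leq 4\lceil z\rceil+1$. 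With this in place (and no need for the no-repeat or zero-matrix reductions), your final paragraph goes through verbatim and yields $k$-permutability for $k=17(4\lceil z\rceil+1)(16\lceil z\rceil+45)$.
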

\begin{proof}
Consider a product of matrices $A_1\cdots A_n$ for $n \geq 17(4\ceil{z}+1)(16\ceil{z}+45)$
and let $m_t$ be the smallest finite entry in the product of the first $t$ matrices $\Sigma_t = A_1\cdots A_t$. (If all entries of $\Sigma_t$ are $-\infty$, we define $m_t = z$). Note that $m_1 \leq \dots \leq m_n$ as
$M_2(\T_{[x,z]}\setminus \{0\})$
is an ideal of $M_2(\T_{[1,z]})$ for all $x \in [1,z]$.
Further, let $k_1,\dots, k_s$ be all the values such that $m_{k_{j}-1} < m_{k_j}$.
For a contradiction, suppose that there does not exist a non-trivial permutation $\sigma \in \cS_n$ such that $A_1\cdots A_n = A_{\sigma(1)} \cdots A_{\sigma(n)}$.
Then, by Lemma~\ref{mideal}, we have that $s > 1$ and that $k_j - k_{j-1} < 17(16\ceil{z}+45)$ for all $j$ as there is no permutation preserving the product $A_1\cdots A_n$ by assumption.
\par For any $1 \leq j \leq s-4$, consider the five values $m_{k_j} < m_{k_{j+1}} < m_{k_{j+2}} < m_{k_{j+3}} < m_{k_{j+4}}$ and suppose $m_{k_{j+4}} \neq z$ . It is easy to see that each of these five values is either an entry of the matrix $\Sigma_{k_j}$, or else exceeds $m_{k_j}$ by at least $1$. Since there are not five distinct entries in
$\Sigma_{k_j}$
we must therefore have $m_{k_{j+4}} \geq m_{k_j}+1$. Thus as $0 \leq m_{t} \leq z$ for all $t$, we have that $s \leq 4\ceil{z} + 1$.
So as $n > 17(4\ceil{z}+1)(16\ceil{z}+45)$ we have that $k_j - k_{j-1} \geq 17(16\ceil{z}+45)$ for some $2 \leq j \leq s$, giving a contradiction. Therefore, $M_2(\T_{[1,2]})$ is strongly permutable.
\end{proof}

\begin{thm}\label{1zperm}
Let $z > 2$. Then $M_n(\T_{[1,z]})$ is strongly permutable if and only if $n < 3$.
\end{thm}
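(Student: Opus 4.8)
The plan is to combine the positive results already proved with one new construction in dimension three. The forward direction is immediate: $M_1(\T_{[1,z]})$ is isomorphic to the (commutative) multiplicative semigroup of $\T_{[1,z]}$ and hence strongly permutable, while $M_2(\T_{[1,z]})$ is strongly permutable by Theorem~\ref{truncperm}. By Proposition~\ref{dimensionchange}, for the converse it suffices to show that $M_3(\T_{[1,z]})$ is not strongly permutable, that is, that for every $k\ge 2$ there is a sequence of $k$ matrices in $M_3(\T_{[1,z]})$ for which no non-trivial permutation preserves the product.

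I would do this by rescaling the unbounded-order construction of Lemma~\ref{unitriperm} so that it fits inside the bounded semiring $\T_{[1,z]}$; this is precisely where the hypothesis $z>2$ enters. Fix $k\ge2$, put $\eta=(z-2)/2>0$, and for $1\le i\le k$ let $a_i=1+\tfrac{i\eta}{2k}$ and $b_i=1+\eta-\tfrac{i\eta}{2k}$, so that $a_i,b_i\in(1,z)$ and $a_i+b_j=2+\eta+\tfrac{(i-j)\eta}{2k}$; since $|i-j|<k$ we have $2+\tfrac{\eta}{2}<a_i+b_j<2+\tfrac{3\eta}{2}<z$, the last inequality being equivalent to $z>2$. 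Now set
\[ B_i=\begin{pmatrix} 0 & a_i & 2+\eta \\ -\infty & 0 & b_i \\ -\infty & -\infty & 0\end{pmatrix}\in M_3(\T_{[1,z]}), \]
an upper-unitriangular matrix (recall that $0$ is the multiplicative identity and $-\infty$ the zero of $\T_{[1,z]}$). Because each $B_i$ is upper triangular with $0$ on the diagonal, a directed path from $1$ to $3$ through a word $B_{\sigma(1)}\cdots B_{\sigma(k)}$ contributes either the value $2+\eta$ (a single step $1\to 3$ inside one factor) or $a_p\otimes b_q=\min(a_p+b_q,z)=a_p+b_q$ (a step $1\to 2$ inside $B_p$ followed, at a strictly later factor, by a step $2\to 3$ inside $B_q$); hence the $(1,3)$-entry of $B_{\sigma(1)}\cdots B_{\sigma(k)}$ is $2+\eta$ plus $\tfrac{\eta}{2k}$ times the largest value of $p-q$ over pairs such that $B_p$ occurs before $B_q$ in that word, or just $2+\eta$ if no such pair has $p>q$.

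The conclusion then follows by reading off this entry. For $\sigma=e$ every factor $B_p$ occurring before a factor $B_q$ has $p<q$, so $(B_1\cdots B_k)_{1,3}=2+\eta$; whereas if $\sigma\ne e$, then $\sigma$ has an inversion, giving positions $j<t$ with $p:=\sigma(j)>\sigma(t)=:q$, and then $(B_{\sigma(1)}\cdots B_{\sigma(k)})_{1,3}\ge 2+\eta+\tfrac{(p-q)\eta}{2k}>2+\eta$. Thus $B_{\sigma(1)}\cdots B_{\sigma(k)}\ne B_1\cdots B_k$ for every $\sigma\in\cS_k\setminus\{e\}$, and since $k$ was arbitrary, $M_3(\T_{[1,z]})$, and hence $M_n(\T_{[1,z]})$ for all $n\ge3$, is not strongly permutable.

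The one point requiring real care is the verification that the ``leaking'' products $a_p+b_q$ never reach the truncation threshold $z$ (for otherwise the $(1,3)$-entries would all collapse to $z$ and detect nothing): this is exactly the slack afforded by $z>2$, together with the $\tfrac{1}{2k}$-scaling of the increments, and once it is secured the argument is simply the finite-window version of the $\Nmax$ computation in Lemma~\ref{unitriperm}. (The same matrices also show $UT_3(\T_{[1,z]})$ is not strongly permutable, though that is not needed here.)
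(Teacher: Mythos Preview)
Your proof is correct and follows essentially the same approach as the paper: both reduce to $n=3$ via Proposition~\ref{dimensionchange}, then exhibit a rescaled version of the unitriangular construction from Lemma~\ref{unitriperm}, using the slack $z>2$ to fit the relevant entries strictly below the truncation threshold so that the $(1,3)$-entry detects any inversion of $\sigma$. The only differences are cosmetic---the paper takes an arbitrary $\varepsilon\in(0,z-2)$ and scales by $1/m$, whereas you fix $\eta=(z-2)/2$ and scale by $1/(2k)$; and the paper argues the non-truncation point implicitly (if some term truncated to $z$ it would already exceed $2+\varepsilon$), whereas you verify $a_p+b_q<z$ explicitly up front.
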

\begin{proof}
The direct implication is Theorem~\ref{truncperm}. For the converse implication it suffices by Proposition~\ref{dimensionchange} to show that $M_3(\T_{[1,z]})$ is not permutable. We do this by a variation of the method used to prove Lemma~\ref{unitriperm} above.
\par Choose $\varepsilon$ with $0 < \varepsilon < z-2$. For a fixed $m$, we define a sequence of matrices $B_1,B_2,\dots,B_m$ by
\[ 
B_i = 
\begin{pmatrix}
0 & 1 + \frac{i}{m}\varepsilon & 2 + \varepsilon \\
-\infty & 0  & 1+ \varepsilon - \frac{i-1}{m}\varepsilon \\
-\infty & -\infty & 0
\end{pmatrix}
\]
By induction the product of the first $k$ such matrices is given by
\[ \prod_{i=1}^k B_i = \begin{pmatrix}
0 & 1 + \frac{k}{m}\varepsilon & 2 + \varepsilon \\
-\infty & 0 & 1 + \varepsilon \\
-\infty & -\infty & 0
\end{pmatrix} \]
Now, suppose $\sigma \in \cS_m$ is such that $B_\sigma := \prod_{i=1}^m B_{\sigma(i)} = \prod_{i=1}^m B_i$.
By the definition of matrix multiplication, for any $j < k$ we must have
\[ 2+ \varepsilon \ = \ (B_\sigma)_{1,3} \ \geq \ (B_{\sigma(j)})_{1,2} + (B_{\sigma(k)})_{2,3} \ = \  2 + \varepsilon + \frac{\varepsilon}{m} (\sigma(j) - \sigma(k) + 1) \]
and hence $\sigma(j) < \sigma(k)$. Since $\sigma$ is a permutation this means $\sigma$ is the identity permutation. 
 Further, as $m$ was arbitrary $M_3(\T_{[1,z]})$ is not permutable, so together with the previous theorem we get that $M_n(\T_{[1,z]})$ is permutable if and only if $n < 3$. 
\end{proof}

\end{document}